\numberwithin{equation}{section}
\theoremstyle{plain} 
\newtheorem{thm}{Theorem}[section] 
\newtheorem{lem}[thm]{Lemma} 
\newtheorem{prop}[thm]{Proposition} 
\newtheorem{rmk}[thm]{Remark} 
\newtheorem{dfn}[thm]{Definition}
\definecolor{custom-blue}{RGB}{0,99,166} 
\begin{document}
\author{$\text{\sc{Stefania Russo}}^*$}

\title{Higher Differentiability of Minimizers for Non-Autonomous Orthotropic Functionals}

\maketitle
\maketitle

\let\thefootnote\relax\footnotetext{
			\small $^{*}$ \textbf{Stefania Russo, }Dipartimento di Matematica e Applicazioni ``R. Caccioppoli'', Università degli Studi di Napoli ``Federico II'', Via Cintia, 80126 Napoli,
 Italy. E-mail: \textit{stefania.russo3@unina.it}}

\begin{abstract}
We establish the higher differentiability for the minimizers of the following non-autonomous integral functionals
\begin{equation*}
 \mathcal{F}(u,\Omega):= \, \displaystyle \Huge  \int_\Omega \sum_{i=1}^{n} \, a_i(x) \lvert u_{x_i} \rvert^{p_i} dx,
\end{equation*}
with  exponents \( p_i \geq 2 \) and with  coefficients \( a_i(x) \) that satisfy a suitable Sobolev regularity. The main result is obtained, as usual, by imposing a gap bound on the exponents \( p_i \), which depends on the dimension and on the degree of regularity of the coefficients \( a_i(x) \).
\end{abstract}

\medskip
\noindent \textbf{Keywords:} Anisotropic growth,  Sobolev
 regularity, Higher differentiability 
\medskip \\
\medskip
\noindent \textbf{MSC 2020:} 35J70, 35B65, 49K20.

\section{Introduction}
In this paper, we investigate the differentiability properties of local minimizers of convex functionals, exhibiting an orthotropic structure and depending also on the $x-$variable. More precisely, we consider
\begin{equation}\label{functional}
 \mathcal{F}(u,\Omega):= \, \int_\Omega f(x, Du) dx,
\end{equation}
where $\Omega \subset \mathbb{R}^n$ is a bounded open subset, $n\geq 2$, $u : \Omega \to \mathbb{R}$, and $f:\Omega\times\mathbb{R}^{ n}\to [0,+\infty)$ has the following form $$f= f(x, \xi)= \sum_{i=1}^{n} \, a_i(x) \lvert \xi_i \rvert^{p_i}.$$ 
Here $a_i(x)$ are bounded measurable coefficients such that $0< \lambda \leq a_i (x) \leq \Lambda  $ for some constants $\lambda, \Lambda$, for every $i = 1,2,...,n$ and a.e. $ x \in \Omega$.  For the exponents $2\leq p_1 \le p_2 \leq ... \leq p_n$, we shall denote by $\mathbf{p} = (p_1, p_2, \ldots, p_n)$ and by
\begin{equation*}
    W^{1,\mathbf{p}}_{\mathrm{loc}}(\Omega) = \left\{ u \in W^{1,1}_{\mathrm{loc}}(\Omega) : u_{x_i} \in L^{p_i}_{\mathrm{loc}}(\Omega), \, i = 1, \ldots, n \right\},
    \end{equation*}
the  corresponding anisotropic Sobolev space.\\
 One can easily check  that there exist positive constants $ L, l$ depending on $\lambda, \Lambda, p_i$ such that
\begin{equation}\label{A2}
    \langle D_\xi f(x, \xi)-D_\xi f(x, \eta),\xi-\eta\rangle  \geq  l \sum_{i=1}^{n}(|\xi_i|^2+|\eta_i |^2)^\frac{p_i -2}{2}|\xi_i - \eta_i |^2 \tag{A1}
\end{equation}
\begin{equation}\label{A3}
     |D_\xi f(x, \xi)-D_\xi f(x, \eta)| \leq  L \sum_{i=1}^{n}(|\xi_i |^2+|\eta_i |^2)^\frac{p_i -2}{2}|\xi_i - \eta_i | \tag{A2}
\end{equation}

\noindent for a.e. $x \in \Omega$ and all $\xi, \eta \in \mathbb{R}^{ n}$.
Concerning the dependence on the $x$-variable, we assume that $$a_i(x) \in W^{1,r}_{\mathrm{loc}}(\Omega) \qquad \forall i=1, \dots,n ,$$ so that,
denoting by $g(x)=  \underset{i}{\max} \{|Da_i(x)| \}$, we have that $g(x) \in L_{\mathrm{loc}}^r (\Omega)$ and the following inequality
\begin{equation}\label{A4}
    |D_\xi f(x,\xi)-D_\xi f(y, \xi)| \leq |x-y|\,(g(x)+g(y)) \, \sum_{i=1}^{n} |\xi_i|^{p_i-1} \tag{A3}
\end{equation}
\noindent holds for a.e.\ $x,y \in \Omega$ and every $\xi \in \mathbb{R}^{n}$.\\

Let us recall the following definition of local minimizer.
\begin{dfn}
{\em A function $u\in W_{\rm loc}^{1,\mathbf{p}}(\Omega, \mathbb{R}^N)$ is a  local minimizer of
\eqref{functional} if, for every open subset $\tilde{\Omega} \Subset \Omega$, we have   
$\mathcal{F}(u;\tilde{\Omega})\le  \mathcal{F}(\varphi;\tilde{\Omega})$ 
for all $\varphi\in u+W_0^{1,\textbf{p}}(\tilde{\Omega},\mathbb{R}^N)$.}
\end{dfn}
The energy densities $f$ that satisfy anisotropic growth condition fit into the wider context of those with $(p,q)-$growth condition as follows
$$|\xi|^p \leq f(x,\xi) \leq (1 + |\xi|^2)^{\frac{q}{2}}, \qquad1<p<q, $$
where $p= c(p_i)$ and $q= \max \{p_i, i=1, \dots, n \}.$\\
 The regularity theory for functionals with non-standard growth conditions was first developed by P. Marcellini in the pioneering works \cite{MarcelliniEs1, MarcelliniEs2}.
 From the beginning it has been clear  that local minimizers of functionals like \( \mathcal{F} \) may become unbounded when the ratio \( \frac{q}{p} \) is too large.  Indeed, the gap $\frac{q}{p}$ cannot differ to much from $1$ and sufficient conditions to the regularity can be expressed as
$$\frac{q}{p}\le c(n) \to 1 \, \text{as } \ n \to \infty,$$
as proved by the counterexamples in(\cite{Giaquinta}, \cite{MarcelliniEs1}---\cite{MarcelliniEs3}). Since then, a huge number of papers dealing with the $(p,q)-$growth problem have been published (see, for example \cite{Adi,Barone,6}, \\ \cite{12}---\cite{Ele3},\cite{25,Hasto, Koch}). However, since it is impossible to provide an exhaustive list of references, we refer the interested reader to the recent survey (\cite{surveyMar1}---\cite{surveyMing}).\\
However, compared to these papers, the difference with our work lies in the fact that the behavior of our functional at \eqref{functional} depends on the components of the gradient and although they exhibit $(p,q)$-growth, they need to be treated with appropriate arguments.\\
Actually, the regularity of local minimizers of functionals with orthotropic structure is a well-studied problem: for the specific case of  subquadratic nonstandard growth, some results can be found in  \cite{LemmaBrasco}---\cite{6}, \cite{17}, for \( f = 0 \). In the case of a right-hand side \( f \neq 0 \), we refer to  \cite{BraTau, Carozza, 53}.

It has been recently proved in \cite{Brascop12} (sub-quadratic case) and in \cite{LemmaBrasco} (superquadratic case) that any local minimizer \( U \in W^{1,p}_{\text{loc}}(\Omega) \cap L^\infty(\Omega) \) is such that \( DU \in L^\infty_{\text{loc}}(\mathbb{R}^N) \) and \( |U_{x_i}|^{\frac{p_i-2}{2}} U_{x_i} \in W^{1,2}_{\text{loc}}(\Omega) \), for \( i = 1, \dots, N \). One of the main difficulties of functionals with orthotropic structure is that they are degenerate.\\
In \cite{BraTau} they even consider the following widely degenerate functional 
$$F(u; \Omega) = \sum_{i=1}^{N} \int_{\Omega} \frac{1}{p_i} \left( |u_{x_i}| - \delta_i \right)^{p_i} \, dx + \int_{\Omega} f(u) \, dx, $$
with $ u \in W^{1,p}_{\text{loc}}(\Omega)\cap L^\infty(\Omega), \, f \in W^{1,p'}_{\text{loc}}(\Omega), $
and they proved the higher differentiability of the local minimizers
\[
\left( |u_{x_i}| - \delta_i \right)^\frac{p_i}{2}_+ \frac{u_{x_i}}{|u_{x_i}|}  \in W^{1,2}_{\text{loc}}(\Omega), \quad i = 1, \dots, N.
\]
In particular, for local weak solutions of the anisotropic orthotropic $p$-Laplace equation (i.e. the case $\delta_i = 0$), they get
\[
|u_{x_i}|^{\frac{p_i - 2}{2}}  \, u_{x_i} \in W^{1,2}_{\text{loc}}(\Omega), \quad i = 1, \dots, N.
\]
Here we consider the case $\delta_i = 0$, but differently from \cite{LemmaBrasco}-\cite{BraTau}, we allow the presence of some coefficients $a_i(x)$  and we are interested in the conditions that must be imposed on the coefficients in order to achieve analogous higher differentiability results for the solutions. Our motivation comes from the recent paper by P. Marcellini \cite{16}, in which is clearly stated that currently there are few known regularity results for problems of this type, in relation to the properties of the coefficients. Indeed, as far as we know, few regularity results are available for the local minimizers of functionals with an orthotropic structure in the so-called non-autonomous case (see \cite{feopass}). We aim to fill this gap by trying to identify how the regularity of the functions $a_i(x)$ influences that of the minimizer. \\
Actually, our main result consists in proving that a suitable Sobolev regularity of the coefficients $a_i(x)$ transfers into a higher differentiability for the local minimizers of \eqref{functional}.\\
We recall that recent studies have shown that the weak differentiability of the map \( D_\xi f \), whether of integer or fractional order with respect to the \( x \)-variable, is a sufficient condition to achieve higher differentiability (see \cite{cup, Torricelli, 25,40} for the case of Sobolev spaces with integer order and \cite{ 2,15, Grimaldi1, Ipocoana1, Russo} for the fractional one) both in case of standard and non standard growth. \\
In particular, the general requirement for obtaining higher differentiability for local minimizers is a Sobolev-type regularity for the coefficients with an exponent
$r$ that is greater than or equal to the dimension 
$n$. When dealing with bounded minimizers, the situation changes, and sufficient assumptions on the Sobolev regularity of the coefficients can be made independently of the dimension (see \cite{6, Colombo, cup,25}). Our result follows this approach. Even though the order of summability of the coefficients depends on the dimension $n$, it does so only through the bound that allows us to handle locally bounded minimizers.\\
Our aim is to prove the following
\begin{thm}\label{thmBfinito}
 Let $u \in W^{1,\textbf{p}}(\Omega, \mathbb{R}^{n })$ be a local minimizer of \eqref{functional} under assumptions \eqref{A2}--\eqref{A4} with exponents $p_i\geq 2, \forall i=1,\dots,n $ such that
\begin{equation}
p_n <
\begin{cases}
     \min\{ \bar{p}^*,p_1 +2 \} \, &\text{if } p_n < \bar{p}^*,  \\
     p_1 +2 \quad &\text{otherwise},
    \end{cases}
\end{equation}
and with a function $g \in L^{r}_{\mathrm{loc}}(\Omega, \mathbb{R}^n)$, with $r$ such that
\begin{equation}\label{Ipotesir}
   r> p_n + 2 .
\end{equation} 
Then 
\begin{equation}
V_{p_i}(u_{x_i}) \in W^{1,2}_{\mathrm{loc}}(\Omega,\mathbb{R}^{ n}), \qquad\forall i=1,\dots,n
\end{equation}
and the following estimates
\begin{align}
    \sum_{i=1}^{n} \left(  \int_{B_{R/4}}|u_{x_i}|^{p_i +2 } dx \right) & \leq c  \left( \sum_{i=1}^{n} \Vert u_{x_i} \Vert_{L^{p_i}(B_R)} + \Vert g \Vert_{L^r (B_{R})} \right)^\sigma 
\end{align}
and
\begin{align}
      \bar{c}  \sum_{i=1}^{n}  \left( \int_{B_{\frac{R}{4}}} |u_{x_i x_j}|^2|u_{x_i}|^{p_i-2} dx \right) & \leq c  \left( \sum_{i=1}^{n} \Vert u_{x_i} \Vert_{L^{p_i}(B_R)} + \Vert g \Vert_{L^r (B_{R})} \right)^\sigma 
, 
\end{align}
hold for every pair of concentric balls $B_{R/4} \subset B_{R} \Subset \Omega$, where $c = c(n,p_i,\lambda, \Lambda,R)$ and $\sigma= \sigma (n,p_i,q)$ are positive constants.
\end{thm}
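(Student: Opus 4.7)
The plan is a difference-quotient argument applied to the Euler–Lagrange equation of $\mathcal{F}$, coupled with an a priori higher-integrability step. Because the minimizer is only known to lie in $W^{1,\mathbf{p}}_{\mathrm{loc}}$, I would first pass to a regularized problem — mollifying the coefficients $a_i$ and adding a small strictly elliptic perturbation such as $\varepsilon(1+|Du|^2)^{p_n/2}$ — whose minimizer $u_\varepsilon$ is qualitatively regular enough that all manipulations below are legitimate. Every estimate will be uniform in $\varepsilon$, and the conclusion will follow by lower semicontinuity after the limit $\varepsilon\to 0$.

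The core computation is a Caccioppoli-type inequality for finite differences. For $\eta$ a smooth cutoff supported in $B_R$ and for $|h|$ small, I would test the weak Euler–Lagrange equation
\[
\int_\Omega \sum_i p_i\, a_i(x)|u_{x_i}|^{p_i-2}u_{x_i}\,\varphi_{x_i}\,dx = 0
\]
with $\varphi = \tau_{s,-h}(\eta^2\tau_{s,h}u)$, where $\tau_{s,h}$ is translation by $he_s$. After a discrete integration by parts and a Leibniz decomposition of $\tau_{s,h}[a_i(x)|u_{x_i}|^{p_i-2}u_{x_i}]$, assumption \eqref{A2} forces on the left a principal quantity bounded below by
\[
c \sum_i \int \eta^2\, a_i(x+he_s)\,\bigl(|u_{x_i}(x+he_s)|^2+|u_{x_i}(x)|^2\bigr)^{(p_i-2)/2}|\tau_{s,h}u_{x_i}|^2\,dx,
\]
which is comparable to $\sum_i\int\eta^2|\tau_{s,h}V_{p_i}(u_{x_i})|^2\,dx$. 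The two error terms involve respectively $\tau_{s,h}a_i$ (controlled by \eqref{A4} through $|\tau_{s,h}a_i|\leq |h|(g(x)+g(x+he_s))$) and $\eta\nabla\eta$. Young's inequality with weight $(|u_{x_i}|+|u_{x_i}(x+he_s)|)^{p_i-2}$ absorbs the $|\tau_{s,h}u_{x_i}|^2$-factor back into the left-hand side, leaving the residual
\[
\sum_i \int \eta^2\,g^2\,|u_{x_i}|^{p_i}\,dx + \sum_i \int |\nabla\eta|^2\,|u_{x_i}|^{p_i}\,dx.
\]

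To close the argument I would estimate $\int g^2|u_{x_i}|^{p_i}$ by Hölder against $g\in L^r_{\mathrm{loc}}$, which demands $u_{x_i}\in L^{p_i r/(r-2)}_{\mathrm{loc}}$. The assumption $r>p_n+2$ is calibrated exactly so that this exponent is at most $p_i+2$, so the first stated estimate on $\int|u_{x_i}|^{p_i+2}$ is precisely the input needed. This higher integrability is proved separately, and it is where the gap condition on the exponents plays its role: in the regime $p_n<\bar{p}^*$ one uses the anisotropic Sobolev embedding $W^{1,\mathbf{p}}\hookrightarrow L^{\bar{p}^*}$ together with local boundedness of $u$ to run a Moser-type iteration consistent with the distinct scalings of each direction; in the complementary regime $p_n<p_1+2$ the crossed terms $|u_{x_j}|^{p_i}$ with $j\neq i$ stay controlled by the natural energy and direct interpolation suffices. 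Once higher integrability is available, dividing the Caccioppoli inequality by $|h|^2$ and sending $h\to 0$ yields $V_{p_i}(u_{x_i})\in W^{1,2}_{\mathrm{loc}}$ together with the second estimate, and finally the result transfers to $u$ by the uniform-in-$\varepsilon$ estimates.

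The main obstacle will be the higher-integrability step. In an orthotropic and non-autonomous setting the directions are genuinely decoupled and only the gap $p_n<p_1+2$ prevents cross-terms $|u_{x_j}|^{p_j}$ from contaminating the direction-$i$ test. Setting up a Moser iteration that improves integrability direction by direction, while simultaneously absorbing the weighted $g^2|u_{x_i}|^{p_i}$ residual under the integrability threshold $r>p_n+2$, is the delicate point; the Caccioppoli step itself and the final passage to the limit are then comparatively routine.
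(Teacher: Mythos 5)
Your overall frame (regularize, difference-quotient Caccioppoli estimate, uniform bounds, pass to the limit) matches the paper's architecture, but the mechanism you propose to obtain $u_{x_i}\in L^{p_i+2}_{\mathrm{loc}}$ is not what the paper does and would likely not close. You treat the higher integrability as a \emph{separate} input, to be proved in advance by a direction-by-direction Moser-type iteration. The paper does no such iteration. Instead it first establishes local boundedness of $u$ from Theorem \ref{ulimitatoCupini} (the hypothesis $p_n<\bar p^*$ is there solely for this, and the ``otherwise'' branch corresponds to $\bar p\ge n$ where boundedness comes for free from the Troisi embedding), and then invokes the interpolation inequality of Bousquet--Brasco (Proposition \ref{LemmaBrasco}): if $u\in L^\infty_{\mathrm{loc}}$ and $|u_{x_kx_k}|^2|u_{x_k}|^{p_k-2}\in L^1_{\mathrm{loc}}$, then $u_{x_k}\in L^{p_k+2}_{\mathrm{loc}}$ with a quantitative bound in terms of those two quantities. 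So the $L^{p_i+2}$-norm is \emph{derived} from the a priori assumption $V_{p_i}(u_{x_i})\in W^{1,2}_{\mathrm{loc}}$, not proved independently, and the resulting estimate is a bootstrap: the right-hand side of the Caccioppoli inequality contains $\int|u_{x_i}|^{p_i+2}$, which the interpolation inequality converts back into $\int |u_{x_ix_i}|^2|u_{x_i}|^{p_i-2}$ times $\|u\|_\infty$ with a small coefficient $\beta$, and the whole thing closes only through the hole-filling iteration Lemma \ref{lm3}. Attempting a genuine Moser iteration in this degenerate orthotropic, non-autonomous setting is a substantial undertaking with no obvious endpoint and is not needed.

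You also misidentify the residual coming from the $\eta\nabla\eta$-term. You write it as $\sum_i\int|\nabla\eta|^2|u_{x_i}|^{p_i}$, but the actual residual after H\"older (see \eqref{termtau}) is cross-directional: it contains $\int|\tau_{j,h}u|^{p_i}$, and estimating this is exactly where the gap $p_n<p_1+2$ enters. For $i\le j$ one uses $p_i\le p_j$ and the natural energy $\int|u_{x_j}|^{p_j}$; for $i>j$ one needs $p_i\le p_j+2$ and the \emph{improved} integrability $\int|u_{x_j}|^{p_j+2}$, which again is supplied by the interpolation inequality. Without this splitting the Caccioppoli estimate does not close, and without noticing that $\int|u_{x_j}|^{p_j+2}$ feeds back into the left-hand quantity through interpolation you cannot see why the iteration lemma is indispensable. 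Finally, you read the two-branch hypothesis on $p_n$ as two different regimes requiring two different higher-integrability arguments; in fact both branches impose $p_n<p_1+2$ (the gap needed for $I_2$) and differ only in how local boundedness of $u$ is secured.
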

Now, we briefly describe the strategy of the proof, which, as usual, will consist of an approximation argument, a uniform a priori estimate, and a passage to the limit. It is well known that the a priori estimate needs to be established for sufficiently regular minimizers. Therefore, the first step is to establish higher differentiability for minimizers of functionals with Lipschitz continuous coefficients, since this result is not available in the literature. More specifically, for a given $\varepsilon \geq 0$, we investigate the function 
\[
f_\varepsilon(x, \xi) = \sum_{i=1}^{n} \tilde{a}_i(x) |\xi_i|^{p_i} + \varepsilon \left( 1 + |\xi|^{\frac{p_n}{2}} \right)^2
\]
where the coefficients $a_i$ satisfy the condition:
\[
L_i = \sup_{x, y \in \Omega, x \neq y} \frac{|\tilde{a}_i(x) - \tilde{a}_i(y)|}{|x - y|} < \infty.
\]
The first goal is to analyze the regularity properties of minimizers of the following functional
\[
\mathcal{F}_\varepsilon(u, \Omega) = \int_\Omega f_\varepsilon(x, Du) \, dx.
\]

After, we aim to establish an uniform a priori estimate, which plays a crucial role in the proof of our main result. Finally the Theorem follows by proving that the a priori estimate is preserved in passing to the limit.\\

We briefly describe the structure of the paper. After summarizing some known results and fixing a few notations, we prove a lemma for approximating functionals to be used in the a priori estimates in Section \ref{PreREG} . Section \ref{Apriori} is devoted to the proof of some a priori estimates for solutions to a family of approximating problems. Next, in Section \ref{Appro}, we pass to the limit in the approximating problems.
\section{Preliminaries}
In this section we introduce some notations and collect several results that we shall use to establish our main result.
We   denote by $c$ or $C$ a general constant that may vary on different occasions, even within the same line of estimates. 

In what follows, $B(x,r)=B_{r}(x)= \{ y \in \mathbb{R}^{n} : |y-x | < r  \}$ will denote the ball centered at $x$ of radius $r$. We will omit the indication of the center $x$ when no confusion arises.

We recall here the well known Sobolev-Troisi inequalities \cite{ feopass, troisi}.
\begin{lem}\label{LemmaTroisi}
    Let $E \subset \mathbb{R}^n$ be a bounded open set and consider $u \in W_0^{1, \textbf{p}}(E), \, p_i \geq 1$ for all $i=1, ..., n,$ and set
    \begin{align}\label{definizionePi}
        \frac{1}{\overline{p}} = \frac{1}{n} \sum_{i=1}^{n} \frac{1}{p_i}, \qquad \overline{p}* = \frac{n \overline{p}}{n- \overline{p}}.
    \end{align}
    If $\overline{p} < n$, then there exists a positive constant $\gamma$ depending only on the  parameters $\{ n, p_1, \dots, p_n \}$, such that
    \begin{align*}
        \lvert \lvert u \rvert \rvert_{\overline{p}*}\leq \gamma \sum_{i=1}^{n} \lvert \lvert u_{x_i} \rvert \rvert_{p_i}.
    \end{align*}
     If $\overline{p} = n$, for every $1 \leq r < \infty$  there exists a positive constant $\gamma_1$ depending only on  $\{ n, r, p_1, \dots, p_n \}$, such that
    \begin{align*}
        \lvert \lvert u \rvert \rvert_{r} \leq \gamma_1  \left[ \sum_{i=1}^{n} \lvert \lvert u_{x_i} \rvert \rvert_{p_i} + \lvert \lvert u \rvert \rvert_{1} \right].
    \end{align*}
      If $\overline{p} > n$,  there exists a positive constant $\gamma_2$ depending only on  $\{ n, p_1, \dots, p_n \}$, such that
    \begin{align*}
        \lvert \lvert u \rvert \rvert_{\infty} \leq \gamma_2  \left[ \sum_{i=1}^{n} \lvert \lvert u_{x_i} \rvert \rvert_{p_i} + \lvert \lvert u \rvert \rvert_{1} \right].
    \end{align*}
\end{lem}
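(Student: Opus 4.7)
The plan is to follow the three-step scheme sketched after the statement: approximation, a uniform a priori estimate, and passage to the limit. In the first step, I would regularize the coefficients by convolution, setting $\tilde{a}_i^\varepsilon = a_i \ast \rho_\varepsilon$ so that each $\tilde{a}_i^\varepsilon$ is Lipschitz with constant controlled by $\|g\|_{L^r}$, and add the perturbation $\varepsilon(1+|\xi|^{p_n/2})^2$ to obtain the regularized integrand $f_\varepsilon$ of standard $(p_n,p_n)$-growth. The corresponding minimizer $u_\varepsilon$ of $\mathcal F_\varepsilon$ in $u + W_0^{1,\mathbf p}$ then admits the higher differentiability prepared in Section~\ref{PreREG}, so that all computations below (second differences, Euler--Lagrange differentiated, integration by parts) are rigorously justified.

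In the second step I would establish a Caccioppoli-type estimate for second derivatives of $u_\varepsilon$ that is uniform in $\varepsilon$. Differentiating the Euler--Lagrange equation in direction $x_s$ and testing with $\varphi = \eta^{2\gamma}\,\partial_{x_s} u_\varepsilon$ (with $\eta$ a standard cut-off between $B_{R/4}$ and $B_R$), the ellipticity assumption \eqref{A2} produces on the left-hand side the good quantity
\[
\sum_{i,s}\int \eta^{2\gamma}\,(\,|u_{x_i}|^2+\cdots)^{\frac{p_i-2}{2}}\,|u_{x_ix_s}|^2\,dx,
\]
which is exactly the $W^{1,2}_{\mathrm{loc}}$-norm of $V_{p_i}(u_{x_i})$ one wants to control. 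The $x$-dependence generates, via \eqref{A4}, a remainder of the form $\int \eta^{2\gamma}\,g(x)\sum_i |u_{x_i}|^{p_i-1}|u_{x_sx_s}|\,dx$; splitting it with Young's inequality yields an absorbable term $\delta\int \eta^{2\gamma}|u_{x_i}|^{p_i-2}|u_{x_sx_s}|^2$ plus the error
\[
C_\delta\int \eta^{2\gamma}\,g(x)^2\,|u_{x_i}|^{p_i}\,dx.
\]
Applying Hölder with exponents $\frac{r}{2}$ and $\frac{r}{r-2}$ and using \eqref{Ipotesir} ($r>p_n+2$), this is estimated by $\|g\|_{L^r}^{2}\,\bigl(\int |u_{x_i}|^{p_i+2}\bigr)^{\frac{r-2}{r}}$, which is precisely why the assumption on $r$ is chosen as $p_n+2$.

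The third step is the bootstrap. The Caccioppoli inequality gives $V_{p_i}(u_{x_i})\in W^{1,2}$, hence by Sobolev $V_{p_i}(u_{x_i})\in L^{2^*}$, i.e.\ $u_{x_i}\in L^{\frac{p_i}{2}\cdot 2^*}$; more precisely, using Lemma~\ref{LemmaTroisi} on the vector $\bigl(V_{p_1}(u_{x_1}),\dots,V_{p_n}(u_{x_n})\bigr)$ and the anisotropic exponent $\bar p$, one promotes the $L^{p_i}$-integrability of $u_{x_i}$ to $L^{\bar p^*/2\cdot p_i}$ (or $L^\infty$ if $\bar p>n$). The gap bound $p_n<\min\{\bar p^*,p_1+2\}$ is exactly what is needed to reabsorb the $L^{p_i+2}$-norm produced on the right-hand side into the left-hand side on a smaller ball: the condition $p_n<p_1+2$ ensures $p_i+2\le p_1+2\le \bar p^*\cdot\frac{p_i}{p_n}$, letting the Sobolev/Troisi output dominate the Young remainder. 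Iterating this on a sequence of shrinking balls (standard hole-filling / iteration of radii) closes the estimate and furnishes both displayed inequalities in the statement, uniformly in $\varepsilon$.

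The last step is the passage to the limit $\varepsilon\to 0$. Because the a priori bounds are uniform, $\{u_\varepsilon\}$ is bounded in $W^{1,\mathbf p}$ and the sequence $\{V_{p_i}(u_{\varepsilon,x_i})\}$ is bounded in $W^{1,2}_{\mathrm{loc}}$; weak compactness, together with the strict convexity of $f_\varepsilon$ in $\xi$ and the uniqueness (up to constants) of the minimizer of $\mathcal F$ under a Dirichlet datum coming from $u$, identifies the limit with the original minimizer $u$. The main obstacle I expect is the coefficient-induced remainder at the Caccioppoli step: keeping the exponents aligned so that (i) the $L^r$-bound on $g$ pairs with $|u_{x_i}|^{p_i+2}$ via Hölder, (ii) the Troisi inequality upgrades this $L^{p_i+2}$-mass back to something that can be reabsorbed on a smaller ball, and (iii) the gap bound $p_n<p_1+2$ is exactly what makes the reabsorption loop close, requires careful choice of the cut-off power $\gamma$, of the Young splitting parameter $\delta$, and of the radii iteration — this is where the constant $\sigma(n,p_i,q)$ in the final estimate is produced.
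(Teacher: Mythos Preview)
Your proposal does not address the statement at hand. Lemma~\ref{LemmaTroisi} is the classical anisotropic Sobolev--Troisi embedding: given $u\in W_0^{1,\mathbf p}(E)$, control $\|u\|_{\bar p^*}$ (or $\|u\|_r$, or $\|u\|_\infty$) by $\sum_i\|u_{x_i}\|_{p_i}$. This has nothing to do with minimizers of the functional $\mathcal F$, with the coefficients $a_i$, with the gap condition $p_n<p_1+2$, or with any approximation procedure. The paper does not prove this lemma at all; it merely recalls it from the literature (the references \cite{feopass,troisi}) as a preliminary tool.

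What you have actually sketched is a proof of the main result, Theorem~\ref{thmBfinito}, not of Lemma~\ref{LemmaTroisi}. You have mistaken the target: your outline of regularization of the $a_i$, Caccioppoli estimates for $V_{p_i}(u_{x_i})$, control of the $g$-remainder via H\"older with $r>p_n+2$, and passage to the limit is essentially the architecture of Sections~\ref{PreREG}--\ref{Appro}, and in broad strokes it matches the paper's strategy for the main theorem (with some differences: the paper uses Proposition~\ref{LemmaBrasco} and local boundedness from Theorem~\ref{ulimitatoCupini} to upgrade to $L^{p_i+2}$, rather than the Troisi embedding you invoke in your bootstrap step). But none of this is relevant to proving Lemma~\ref{LemmaTroisi} itself, which is a pure functional-inequality statement with a self-contained classical proof (iterated one-dimensional integration plus anisotropic H\"older, as in Troisi's original paper).
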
 

Now we recall a well-known iteration lemma (see \cite[Lemma $6.1$]{giusti}).
\begin{lem}\label{lm3} 
Let $Z(t)  :  [\rho,R] \rightarrow \mathbb{R}$ be a bounded nonnegative function, where $R>0$. Assume that for $\rho \leq t  < s \leq R$ it holds
$$Z(t) \leq \theta Z(s) +A(s-t)^{- \alpha} + B(s-t)^{- \beta}+ C$$
where $\theta \in (0,1)$, $A$, $B$, $C \geq 0, \alpha> \beta >0$  are constants. Then there exists a constant $c=c(\alpha, \theta, \beta)$ such that
$$Z(\rho) \leq c \biggl( A(R-\rho )^{- \alpha} + B(R-\rho )^{- \beta}+ C\biggr).$$
\end{lem}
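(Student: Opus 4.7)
The plan is the classical telescoping/hole-filling argument. I would iterate the hypothesis along a geometric sequence of radii $\rho=t_0<t_1<\cdots\to R$ and then exploit $\theta<1$ to sum up the resulting geometric series, absorbing the iteration error at the final step using boundedness of $Z$.

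More precisely, I would first fix $\tau\in(0,1)$ close enough to $1$ so that $\theta\tau^{-\alpha}<1$; this is possible because $\theta<1$ and $\tau^{-\alpha}\to 1$ as $\tau\to 1^-$, and $\tau$ then depends only on $\theta$ and $\alpha$. Define the sequence
$$t_0=\rho,\qquad t_{i+1}=t_i+(1-\tau)\tau^i(R-\rho),$$
so that $t_i\nearrow R$ and $t_{i+1}-t_i=(1-\tau)\tau^i(R-\rho)$. Applying the hypothesis with $t=t_i$, $s=t_{i+1}$ and iterating $N$ times yields
$$Z(\rho)\leq \theta^N Z(t_N)+\frac{A}{[(1-\tau)(R-\rho)]^{\alpha}}\sum_{i=0}^{N-1}(\theta\tau^{-\alpha})^i+\frac{B}{[(1-\tau)(R-\rho)]^{\beta}}\sum_{i=0}^{N-1}(\theta\tau^{-\beta})^i+C\sum_{i=0}^{N-1}\theta^i.$$

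The first geometric series converges by the choice of $\tau$. The second converges as well, because $\beta<\alpha$ and $\tau<1$ imply $\tau^{-\beta}<\tau^{-\alpha}$, hence $\theta\tau^{-\beta}<\theta\tau^{-\alpha}<1$. The last is an ordinary geometric series with ratio $\theta<1$. All three partial sums are therefore bounded by constants depending only on $\theta,\alpha,\beta$. Since $Z$ is bounded and $\theta^N\to 0$, the term $\theta^N Z(t_N)$ vanishes as $N\to\infty$, and passing to the limit gives exactly the stated estimate, with $c=c(\alpha,\beta,\theta)$: the factors $(1-\tau)^{-\alpha}$ and $(1-\tau)^{-\beta}$ are harmlessly absorbed into $c$ because $\tau$ itself depends only on $(\alpha,\theta)$. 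The only delicate point is the calibration of $\tau$ so that the contraction ratio $\theta\tau^{-\alpha}$ stays strictly below $1$; everything else is bookkeeping with geometric series, and the decisive gain is precisely the ``hole-filling'' factor $\theta<1$ that makes the iteration contract.
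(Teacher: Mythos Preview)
Your argument is correct and is precisely the classical telescoping iteration that proves this lemma. The paper itself does not supply a proof: it simply records the statement and refers to \cite[Lemma~6.1]{giusti}, where the very same geometric-sequence-of-radii construction and summation you wrote out is carried out.
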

At this point, we recall a result that will become significant later (for the proof see  \cite[Proposition $4.3$]{LemmaBrasco}).

\begin{prop}\label{LemmaBrasco}
 Let $u \in W^{1,\textbf{p}}_{\mathrm{loc}}(\Omega)$ and assume that there exists   $k \in \{1,2,\dots,n \}$ such that
    $$|u_{x_k x_k}|^2|u_{x_k}|^{p_k-2} \in L^{1}_{\mathrm{loc}}(\Omega).$$
   If $u \in L^{\infty}_{\mathrm{loc}}(\Omega),$  then $u_{x_k} \in L_{\mathrm{\mathrm{loc}}}^{p_k+2}(\Omega)$  and there exist constants $C=C(n, p_k )>0$ and $\gamma= \gamma(p_k)$ such that for every pair of concentric balls $B_{\rho} \subset B_{R}\Subset  \Omega$,  it holds the following
    \begin{equation}
        \int_{B_\rho} |u_{x_k}|^{p_k+2}  dx \leq C ||u||^2_{\infty} \left(  \int_{B_R} |u_{x_k x_k}|^{2}|u_{x_k}|^{p_k-2}  dx+ \left(\frac{1}{R- \rho} \right)^\gamma \int_{B_R} |u_{x_k}|^{p_k} dx\right).
    \end{equation}
\end{prop}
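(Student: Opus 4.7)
The plan is to establish the Caccioppoli-type bound through a single integration by parts in the $x_k$-direction, which trades one factor of $u_{x_k}^2$ in the integrand for $u\cdot u_{x_kx_k}$ (plus a cutoff-derivative term). The pointwise bound $|u|\le\|u\|_\infty$ will then produce a linear prefactor $\|u\|_\infty$ on the right-hand side, and the desired squared factor $\|u\|_\infty^2$ will emerge after one application of Young's inequality with conjugate exponents $(2,2)$.

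Fix $B_\rho\subset B_R\Subset\Omega$ and choose a standard cutoff $\eta\in C^\infty_c(B_R)$ with $\eta\equiv 1$ on $B_\rho$, $0\le\eta\le 1$ and $|D\eta|\le C/(R-\rho)$. Writing $|u_{x_k}|^{p_k+2}=|u_{x_k}|^{p_k}\,u_{x_k}\cdot u_{x_k}$, reading the last factor as $\partial_{x_k}u$ and integrating by parts on $B_R$ (where the boundary term vanishes thanks to $\eta$) yields, after bounding $|u|\le\|u\|_\infty$,
$$
\int_{B_\rho}|u_{x_k}|^{p_k+2}\,dx \;\le\; 2\|u\|_\infty\!\int\!\eta|\eta_{x_k}|\,|u_{x_k}|^{p_k+1}\,dx \;+\; (p_k+1)\|u\|_\infty\!\int\!\eta^2|u_{x_k}|^{p_k}|u_{x_kx_k}|\,dx.
$$
I would then apply Young's inequality with exponents $(2,2)$ to each of the two terms on the right, splitting each product so that one factor is precisely $\eta|u_{x_k}|^{(p_k+2)/2}$. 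Using the identities $|u_{x_k}|^{p_k+1}=|u_{x_k}|^{(p_k+2)/2}\cdot|u_{x_k}|^{p_k/2}$ and $|u_{x_k}|^{p_k}|u_{x_kx_k}|=|u_{x_k}|^{(p_k+2)/2}\cdot|u_{x_k}|^{(p_k-2)/2}|u_{x_kx_k}|$ (both legitimate since $p_k\ge 2$), each application yields a small multiple of $\eta^2|u_{x_k}|^{p_k+2}$, which can be absorbed into the left-hand side, together with $C(\varepsilon)\|u\|_\infty^2|\eta_{x_k}|^2|u_{x_k}|^{p_k}$ and $C(\varepsilon)\|u\|_\infty^2\eta^2|u_{x_k}|^{p_k-2}|u_{x_kx_k}|^2$ respectively. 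Choosing $\varepsilon$ small, absorbing, and invoking $|\eta_{x_k}|\le C/(R-\rho)$ produces the claim with $\gamma=2$.

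The only non-routine point is the rigorous validity of the integration by parts: under the sole hypothesis $u_{x_k}\in L^{p_k}_{\mathrm{loc}}$, neither $|u_{x_k}|^{p_k+1}$ nor $|u_{x_k}|^{p_k}|u_{x_kx_k}|$ is a priori in $L^1_{\mathrm{loc}}$, and the integrability $u_{x_k}\in L^{p_k+2}_{\mathrm{loc}}$ is itself part of the claim. I would circumvent this by performing the whole computation with the bounded truncated quantity $\psi_M(u_{x_k}):=\min(|u_{x_k}|,M)^{p_k}\,u_{x_k}$ in place of $|u_{x_k}|^{p_k}u_{x_k}$. Since $\psi_M$ is Lipschitz and bounded, every product and derivative appearing in the integration by parts is legitimate and integrable. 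The Young-absorption scheme above adapts verbatim, producing an $M$-uniform bound on $\int_{B_\rho}\min(|u_{x_k}|,M)^{p_k}|u_{x_k}|^2\,dx$ by the right-hand side of the claim (here one uses $\min(|u_{x_k}|,M)^{p_k}/|u_{x_k}|^2\le |u_{x_k}|^{p_k-2}$, which is precisely where $p_k\ge 2$ enters). Passing $M\to\infty$ via monotone convergence then recovers the full estimate and simultaneously yields $u_{x_k}\in L^{p_k+2}_{\mathrm{loc}}$.
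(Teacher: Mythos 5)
The paper does not actually contain a proof of this proposition --- it is quoted from the literature (Proposition 4.3 of the cited Bousquet--Brasco paper) --- and your argument is precisely the standard interpolation proof used there: one integration by parts in the $x_k$-direction, the bound $|u|\le\|u\|_\infty$, Young's inequality with absorption, and a truncation to legitimize the computation; it is correct and even gives the explicit value $\gamma=2$. Two minor remarks: $\psi_M$ is Lipschitz but not bounded (Lipschitz together with $\psi_M(u_{x_k})\in L^{p_k}_{\mathrm{loc}}$ and $T_M(|u_{x_k}|)^{p_k}|u_{x_k}|^2\le M^{p_k}|u_{x_k}|^2\in L^1_{\mathrm{loc}}$ is all the truncation needs, the latter also being what makes the absorption step legitimate), and the chain rule/integration by parts uses that $u_{x_kx_k}$ is a genuine weak derivative with $|u_{x_k}|^{(p_k-2)/2}u_{x_kx_k}\in L^2_{\mathrm{loc}}$, which is implicit in the way the proposition is stated and in how it is applied in the paper.
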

We shall use the following local boundedness result for minimizers of the  anisotropic functional defined at \eqref{functional}, whose proof can be found in \cite{uLimitato}.
{ \begin{thm}\label{ulimitatoCupini}
    Let the integrand $f = f(x, \xi)$ in \eqref{functional} satisfy the following
\begin{equation}
f(x, \xi) = F(x, |\xi_1|, \ldots, |\xi_i|, \ldots, |\xi_n|), \tag{1.3}
\end{equation}
\[
f(x, \lambda \xi) \leq \lambda^\mu f(x, \xi), \text{ for some } \mu > 1 \text{ and for every } \lambda > 1.
\]
If $p_n < \bar{p}^*$, where $\bar{p}^*$ is the Sobolev exponent of $p$ ($\bar{p}$ defined at \eqref{definizionePi}),
then every local minimizer $u$ of \eqref{functional} is locally bounded.
\end{thm}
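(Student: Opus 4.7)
The plan is to establish local boundedness via a De Giorgi iteration on super-level sets, adapted to the anisotropic setting. Fix concentric balls $B_\rho \Subset B_R \Subset \Omega$ and a level $k \geq 0$, and set $A_{k,\sigma} := \{x \in B_\sigma : u(x) > k\}$. By applying the same argument to $-u$ we would obtain the symmetric lower bound, so it suffices to prove an upper bound on $u$.

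The first step is a Caccioppoli inequality on super-level sets. For radii $s < t$ between $\rho$ and $R$, pick $\eta \in C_c^\infty(B_t)$ with $\eta = 1$ on $B_s$, $0 \le \eta \le 1$, $|D\eta| \le c/(t-s)$, and use $v := u - \eta(u-k)_+ \in u + W_0^{1,\mathbf{p}}(B_t)$ as a competitor; since $Du = Dv$ on $\{u \le k\}$, minimality gives
\[
\int_{A_{k,t}} f(x,Du)\,dx \le \int_{A_{k,t}} f(x,Dv)\,dx.
\]
On $A_{k,t}$ one has $|Dv_i| \le (1-\eta)|u_{x_i}| + (u-k)|D\eta|$, so the monotonicity of $F$ in each $|\xi_i|$ (assumption (1.3)) plus the quasi-homogeneity $f(x,\lambda\xi)\le \lambda^\mu f(x,\xi)$ together with convexity give
\[
f(x,Dv) \le C \left[(1-\eta)^\mu f(x,Du) + |D\eta|^\mu (u-k)^\mu + f(x,0)\right].
\]
Because $(1-\eta)^\mu$ vanishes on $B_s$, the term $\int_{A_{k,s}} f(x,Du)\,dx$ appears only on the left; a standard hole-filling plus Lemma \ref{lm3} then yields
\[
\int_{A_{k,\rho}} \sum_{i=1}^n |u_{x_i}|^{p_i}\,dx \le \frac{C}{(R-\rho)^{p_n}}\int_{A_{k,R}}\!\!\bigl[(u-k)^{p_n}+1\bigr]\,dx.
\]

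The second step is to couple this Caccioppoli estimate with the anisotropic Sobolev-Troisi inequality (Lemma \ref{LemmaTroisi}). Applied to $w := (u-k)_+ \zeta$ with a cutoff $\zeta$ supported in $B_R$ equal to $1$ on $B_\rho$, it yields $\|w\|_{\bar{p}^*} \le \gamma \sum_i \|w_{x_i}\|_{p_i}$. Distributing the derivatives between $(u-k)_+$ and $\zeta$, bounding each $\|(u-k)_+ \zeta_{x_i}\|_{p_i}$ by the Caccioppoli quantity above, and applying Hölder's inequality on the set $A_{k,R}$ to interpolate between $L^{p_n}$ and $L^{\bar{p}^*}$, one arrives at a nonlinear recursive estimate of the form
\[
\int_{A_{k,\rho}}(u-k)^{p_n}\,dx \le C\,|A_{k,R}|^{\delta}\left[\,(R-\rho)^{-\alpha}\!\int_{A_{k,R}}\!(u-k)^{p_n}\,dx\right]^{\theta},
\]
with $\theta > 1$ and $\delta > 0$; here the strict gap $p_n < \bar{p}^*$ is precisely what guarantees $\theta > 1$. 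Feeding this estimate into a De Giorgi iteration with geometrically decreasing radii $\rho_j \searrow R/2$ and increasing levels $k_j \nearrow k_\infty$ forces $|A_{k_\infty,R/2}| = 0$, i.e.\ $u \le k_\infty$ in $B_{R/2}$, with $k_\infty$ controlled quantitatively by the $L^{p_n}$-norm of $(u-k_0)_+$ on $B_R$.

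The main obstacle is converting the anisotropic energy information into scalar $L^\infty$ control: the derivatives $u_{x_i}$ belong to different Lebesgue spaces, yet the iteration needs a single self-improving estimate in one norm. This forces one to pay the price of working with the largest exponent $p_n$ on the left-hand side of the recursive estimate, and the gap $p_n < \bar{p}^*$ is exactly the sharp threshold that makes the iteration exponent $\theta$ exceed $1$. A secondary delicate point is that the symmetry assumption (1.3) is essential for the truncation test function: it ensures that replacing $u$ by a smaller competitor in absolute value produces a pointwise-smaller integrand, which is what allowed the bound on $f(x,Dv)$ above. Without it, the truncation step would fail and a more involved comparison-principle argument would be required.
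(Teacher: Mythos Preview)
The paper does not supply its own proof of this statement: it is quoted verbatim as a known result, with the line ``whose proof can be found in \cite{uLimitato}'' (Cupini--Marcellini--Mascolo, \emph{J.\ Optim.\ Theory Appl.}\ 166 (2015)). So there is no in-paper argument to compare against.

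That said, your sketch follows exactly the strategy of the cited reference: a Caccioppoli inequality on super-level sets obtained by testing minimality against $u-\eta(u-k)_+$, combined with the anisotropic Sobolev--Troisi embedding to produce a self-improving recursive estimate, and then a De~Giorgi iteration whose convergence is guaranteed by the gap condition $p_n<\bar p^{*}$. The structural hypothesis (1.3) is used precisely where you say, to ensure the componentwise truncation does not increase the integrand. One point that is somewhat glossed over in your write-up is the passage from the anisotropic Caccioppoli bound $\sum_i\int|u_{x_i}|^{p_i}$ to a recursive inequality in a single norm: in the actual argument one does not get $(u-k)^{p_n}$ directly on the right of the Caccioppoli inequality, but rather a sum $\sum_i(u-k)^{p_i}$ (or the analogous quantity coming from $|D\eta|^{p_i}$), and one must use H\"older on $A_{k,R}$ together with the measure factor $|A_{k,R}|$ to absorb these into a single term before iterating. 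This is routine, but it is where the exponent bookkeeping that produces the extra positive power of $|A_{k,R}|$ really happens, and it deserves one explicit line.
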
}
We shall use the  auxiliary function defined by
\begin{center}
$V_{p}(\xi):=(\mu^{2}+|\xi|^{2})^\frac{p-2}{4} \xi $
\end{center}
for all $\xi\in \mathbb{R}^{m}$, $m \in \mathbb{N}$. 
For the function $V_{p}(\xi)$, we recall the following estimate (see e.g. \cite[Lemma 8.3]{giusti}). 
\begin{lem}\label{D1}
Let $1<p<+\infty$. There exists a constants $c_1=c(n,p)>0$ and $c_2=c(n,p)>0$  such that
\begin{center}
$c_1(\mu^{2}+|\xi|^{2}+|\eta|^{2})^{\frac{p-2}{2}} \leq \dfrac{|V_{p}(\xi)-V_{p}(\eta)|^{2}}{|\xi-\eta|^{2}} \leq c_2(\mu^{2}+|\xi|^{2}+|\eta|^{2})^{\frac{p-2}{2}} $
\end{center}
for any $\xi, \eta \in \mathbb{R}^{m}$, $\xi \neq \eta$.
\end{lem}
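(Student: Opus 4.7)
The plan is to derive both bounds simultaneously from the fundamental theorem of calculus applied to the smooth map $V_p$ along the segment from $\eta$ to $\xi$, combined with an explicit analysis of the Jacobian $DV_p$. The first ingredient is the direct computation
\[
DV_p(z) = (\mu^2+|z|^2)^{(p-2)/4}\left[ I + \frac{p-2}{2}\,\frac{z \otimes z}{\mu^2+|z|^2}\right],
\]
whose eigenvalues are $1$ (with multiplicity $m-1$, on the hyperplane $z^\perp$) and $1 + \frac{(p-2)|z|^2}{2(\mu^2+|z|^2)}$ (along $z$). Since for any $p>1$ both numbers lie in a compact subinterval of $(0,\infty)$ depending only on $p$, there exist positive constants $\kappa_1,\kappa_2$ depending only on $p$ with
\[
\kappa_1 (\mu^2+|z|^2)^{(p-2)/4}\, |w| \;\le\; |DV_p(z)\, w| \;\le\; \kappa_2 (\mu^2+|z|^2)^{(p-2)/4}\, |w|.
\]

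The second ingredient is the integral representation
\[
V_p(\xi) - V_p(\eta) \;=\; \int_0^1 DV_p(z_t)(\xi - \eta)\, dt, \qquad z_t := (1-t)\eta + t\xi,
\]
which combined with the previous bound reduces the lemma to a comparison between $(\mu^2+|\xi|^2+|\eta|^2)^{(p-2)/2}$ and $\int_0^1 (\mu^2+|z_t|^2)^{(p-2)/2}\, dt$, with constants depending only on $p$. The upper bound on $|V_p(\xi)-V_p(\eta)|^2$ follows by taking moduli inside the integral and applying Cauchy--Schwarz; the lower bound follows by pairing $V_p(\xi)-V_p(\eta)$ with the unit vector $(\xi-\eta)/|\xi-\eta|$ and exploiting positivity of the integrand.

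In the superquadratic regime $p \ge 2$ the integrand $(\mu^2+s)^{(p-2)/2}$ is increasing in $s$, so $|z_t| \le \max(|\xi|,|\eta|)$ at once yields the upper bound, while the lower bound follows from the elementary identity
\[
\int_0^1 |z_t|^2\, dt \;=\; \tfrac{1}{3}\bigl(|\xi|^2 + \langle \xi, \eta\rangle + |\eta|^2\bigr) \;\ge\; \tfrac{1}{6}(|\xi|^2 + |\eta|^2),
\]
combined with Jensen's inequality when $p \ge 4$, or with a direct pointwise analysis of the quadratic $t \mapsto |z_t|^2$ when $2 \le p < 4$. In the subquadratic regime $1 < p < 2$ the monotonicities reverse and one argues by splitting into the cases $|\xi - \eta| \gtrsim |\xi| + |\eta|$, where the endpoint values dominate the average, and $|\xi - \eta| \ll |\xi| + |\eta|$, where $|z_t|$ remains comparable to $|\xi| + |\eta|$ throughout $[0,1]$.

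The main obstacle is precisely this subquadratic case, in which $(\mu^2+|z|^2)^{(p-2)/4}$ is singular at $z = 0$ and the segment from $\eta$ to $\xi$ may pass through or near the origin; the case analysis above is designed to keep the integration uniformly away from $0$ on a subinterval of positive length determined by the geometry of $\xi$ and $\eta$, so that the eigenvalue bounds for $DV_p$ can be integrated to give the sought-for two-sided estimate. For the purposes of Theorem \ref{thmBfinito} only $p \ge 2$ is invoked, and there the argument proceeds cleanly from the identity for $\int_0^1 |z_t|^2\, dt$ together with the monotonicity of the integrand.
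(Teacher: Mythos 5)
The paper does not actually prove Lemma~\ref{D1}: it is stated with the citation to Giusti's Lemma~8.3, and the integral-representation argument you sketch (differentiate $V_p$, bound the eigenvalues of $DV_p$, integrate along the segment $z_t=(1-t)\eta+t\xi$, and compare the segment integral with $(\mu^2+|\xi|^2+|\eta|^2)^{(p-2)/4}$) is precisely the standard textbook proof, so in spirit you are reproducing the cited argument rather than proposing an alternative.

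One detail in your $p\ge 2$ analysis is off. With your framing, the exponent on the segment integral is $(p-2)/4$, and $s\mapsto(\mu^2+s)^{(p-2)/4}$ is convex only when $(p-2)/4\ge 1$, i.e.\ $p\ge 6$; for $4\le p<6$ the function is concave and Jensen points the wrong way, so the range $4\le p<6$ is not covered by either of your two sub-cases as stated. This is harmless, because the ``direct pointwise analysis'' you invoke for $2\le p<4$ in fact handles all of $p\ge 2$ without Jensen: assuming without loss of generality $|\xi|\ge|\eta|$, one has for $t\in[3/4,1]$
\begin{equation*}
|z_t|\;\ge\; t|\xi|-(1-t)|\eta|\;\ge\;(2t-1)|\xi|\;\ge\;\tfrac12|\xi|,
\end{equation*}
so restricting the segment integral to $[3/4,1]$ already gives
\begin{equation*}
\int_0^1(\mu^2+|z_t|^2)^{(p-2)/4}\,dt\;\ge\;\tfrac14\bigl(\mu^2+\tfrac14|\xi|^2\bigr)^{(p-2)/4}\;\ge\;c(p)\,(\mu^2+|\xi|^2+|\eta|^2)^{(p-2)/4},
\end{equation*}
which is the required lower bound. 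With that small correction your sketch is sound; and since the paper only ever applies the lemma with $p_i\ge 2$, the delicate subquadratic case analysis (segment passing near the origin when $\mu=0$) is not even needed here, as you correctly observe at the end.
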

For further needs we state the following Lemma for the function $V_\delta (\xi)= (\mu^{2}+|\xi|^{2})^\frac{\delta}{2} \xi $. the proof  is obtained by combining \cite[formula 2.1]{Gia} and \cite[Lemma 2.2]{Fusco}.
\begin{lem}\label{Fusco}
Let $\delta \geq 0$. There exists a constant $c_1=c_1(\delta)>0$  such that
  $$ c_1 \langle V_\delta (\xi)- V_\delta (\eta), \xi- \eta \rangle \geq   \, |\xi-\eta|^{2}(\mu^{2}+|\xi|^{2}+|\eta|^{2})^{\frac{\delta}{2}}$$
\end{lem}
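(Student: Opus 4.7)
The plan is to exploit that $V_\delta$ is the gradient of a smooth convex radial potential and then apply the fundamental theorem of calculus along the segment joining $\eta$ to $\xi$. One immediately checks that $V_\delta(\xi) = \nabla G(\xi)$ with $G(\xi) := \frac{1}{\delta+2}(\mu^2+|\xi|^2)^{(\delta+2)/2}$, and a direct computation yields, for every $\zeta, w \in \mathbb{R}^m$,
\begin{equation*}
\langle D^2 G(\zeta)w, w\rangle = (\mu^2+|\zeta|^2)^{\delta/2}|w|^2 + \delta(\mu^2+|\zeta|^2)^{\delta/2-1}\langle\zeta, w\rangle^2 \geq (\mu^2+|\zeta|^2)^{\delta/2}|w|^2,
\end{equation*}
where the last inequality discards the nonnegative rank-one term using $\delta \geq 0$.

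Setting $\zeta_t := \eta + t(\xi-\eta)$ and applying the fundamental theorem of calculus with $w := \xi - \eta$, I would obtain
\begin{equation*}
\langle V_\delta(\xi) - V_\delta(\eta), \xi-\eta\rangle = \int_0^1 \langle D^2 G(\zeta_t)(\xi-\eta), \xi-\eta\rangle\,dt \geq |\xi-\eta|^2 \int_0^1 (\mu^2 + |\zeta_t|^2)^{\delta/2}\,dt.
\end{equation*}
This reduces the whole lemma to the scalar integral inequality
\begin{equation*}
\int_0^1 (\mu^2 + |\zeta_t|^2)^{\delta/2}\,dt \geq c(\delta)\,(\mu^2 + |\xi|^2 + |\eta|^2)^{\delta/2}.
\end{equation*}

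This last inequality is the main obstacle: the segment $t \mapsto \zeta_t$ can pass arbitrarily close to the origin even when both $|\xi|$ and $|\eta|$ are large (the nearly antipodal configuration), so a pointwise lower bound on $|\zeta_t|$ valid for all $t \in [0,1]$ is unavailable. My plan to handle it is to assume without loss of generality $|\xi| \geq |\eta|$ (so that $\mu^2+|\xi|^2+|\eta|^2 \leq 2(\mu^2+|\xi|^2)$) and then apply the reverse triangle inequality on the subinterval $t \in [3/4, 1]$: there
\begin{equation*}
|\zeta_t| \geq t|\xi| - (1-t)|\eta| \geq (2t-1)|\xi| \geq \tfrac{1}{2}|\xi|,
\end{equation*}
hence $\mu^2 + |\zeta_t|^2 \geq \tfrac{1}{4}(\mu^2+|\xi|^2) \geq \tfrac{1}{8}(\mu^2+|\xi|^2+|\eta|^2)$ on that subinterval. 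Integrating over $[3/4,1]$ then produces the desired inequality with an explicit constant $c(\delta) = 2^{-(2+3\delta/2)}$; taking $c_1(\delta) := 1/c(\delta)$ yields the form stated in the lemma.
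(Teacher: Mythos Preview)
Your argument is correct: writing $V_\delta=\nabla G$, applying the fundamental theorem of calculus along the segment, and then bounding $\int_0^1(\mu^2+|\zeta_t|^2)^{\delta/2}\,dt$ from below by restricting to $t\in[3/4,1]$ (where $|\zeta_t|\ge\tfrac12|\xi|$ under the WLOG $|\xi|\ge|\eta|$) is exactly the classical route. The paper does not give its own proof but defers to \cite[formula~2.1]{Gia} and \cite[Lemma~2.2]{Fusco}, whose content is precisely this FTC-plus-segment-integral estimate, so your approach coincides with the one the paper invokes.
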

We conclude this section with an algebric inequality (see \cite{Duzaar, giusti})
\begin{lem}\label{Duzaar}
  For any $\alpha > 0$, there exists a constant $c = c(\alpha)$ such that, for all $\eta, \zeta \in \mathbb{R}^n \setminus \{0\}$, $n \in \mathbb{N}$, we have
\[
\frac{1}{c} \left||\eta|^{\alpha - 1} \eta - |\zeta|^{\alpha - 1} \zeta  \right| \leq \left( |\eta| + |\zeta| \right)^{\alpha - 1} |\eta - \zeta| \leq c \left| \, |\eta|^{\alpha - 1} \eta - |\zeta|^{\alpha - 1} \zeta \,  \right|.
\]
\end{lem}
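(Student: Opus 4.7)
I would introduce the map $F:\mathbb{R}^n\to\mathbb{R}^n$, $F(\xi)=|\xi|^{\alpha-1}\xi$, and note that the claimed inequality is positively $\alpha$-homogeneous under simultaneous scaling $(\eta,\zeta)\mapsto(\lambda\eta,\lambda\zeta)$, so in principle one may normalize $|\eta|+|\zeta|=1$. The overall strategy is the standard one: write $F(\eta)-F(\zeta)$ as a line integral of $DF$ over the segment joining $\zeta$ and $\eta$, use a sharp pointwise bound on $DF$, and then compare the resulting integral with $(|\eta|+|\zeta|)^{\alpha-1}$.

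A direct computation gives
\[
DF(\xi)=|\xi|^{\alpha-1}\Bigl(I+(\alpha-1)\frac{\xi\otimes\xi}{|\xi|^2}\Bigr),
\]
whose eigenvalues are $|\xi|^{\alpha-1}$ (multiplicity $n-1$) and $\alpha|\xi|^{\alpha-1}$ (multiplicity one). Hence
\[
\min(1,\alpha)\,|\xi|^{\alpha-1}|v|^{2}\leq\langle DF(\xi)v,v\rangle\leq\max(1,\alpha)\,|\xi|^{\alpha-1}|v|^{2}.
\]
Writing $\xi_t=\zeta+t(\eta-\zeta)$ and applying the fundamental theorem of calculus with $v=\eta-\zeta$, this yields the bilateral estimate
\[
\min(1,\alpha)|\eta-\zeta|^{2}\int_{0}^{1}|\xi_t|^{\alpha-1}\,dt \leq\langle F(\eta)-F(\zeta),\eta-\zeta\rangle\leq\max(1,\alpha)|\eta-\zeta|^{2}\int_{0}^{1}|\xi_t|^{\alpha-1}\,dt,
\]
and Cauchy--Schwarz in the upper direction together with the trivial bound $\langle F(\eta)-F(\zeta),\eta-\zeta\rangle\leq|F(\eta)-F(\zeta)||\eta-\zeta|$ in the lower direction reduces the proof to the scalar comparison
\[
\int_{0}^{1}|\zeta+t(\eta-\zeta)|^{\alpha-1}\,dt\;\simeq\;(|\eta|+|\zeta|)^{\alpha-1}.
\]

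I would then establish this comparison by a case split based on the ratio $|\eta-\zeta|/\max(|\eta|,|\zeta|)$. If $|\eta-\zeta|\leq\tfrac{1}{2}\max(|\eta|,|\zeta|)$, then $|\xi_t|$ stays comparable to $|\eta|+|\zeta|$ uniformly for $t\in[0,1]$, so the integrand is itself comparable to $(|\eta|+|\zeta|)^{\alpha-1}$ and both bounds follow. In the complementary regime $|\eta-\zeta|\sim|\eta|+|\zeta|$, I would not use the FTC route but argue directly: the upper bound is obtained from $|F(\eta)-F(\zeta)|\leq|F(\eta)|+|F(\zeta)|\leq 2(|\eta|+|\zeta|)^{\alpha}\lesssim(|\eta|+|\zeta|)^{\alpha-1}|\eta-\zeta|$, while the lower bound follows from a compactness argument on the set $\{|\eta|+|\zeta|=1,\;|\eta-\zeta|\geq c\}$, where $|F(\eta)-F(\zeta)|$ is a positive continuous function bounded away from zero. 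The main obstacle is the case $0<\alpha<1$ with $\eta$ and $\zeta$ nearly antipodal, since then $|\xi_t|^{\alpha-1}$ is genuinely singular along the segment; the case split above precisely avoids invoking the FTC identity in that pathological configuration, shifting the burden onto the (soft) compactness argument.
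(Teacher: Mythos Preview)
The paper does not supply a proof of this lemma; it merely records the inequality and cites \cite{Duzaar,giusti}. Your argument is correct and is in fact the standard one behind those references: compute the Jacobian of $F(\xi)=|\xi|^{\alpha-1}\xi$, use the fundamental theorem of calculus along the segment, and compare $\int_0^1|\xi_t|^{\alpha-1}\,dt$ with $(|\eta|+|\zeta|)^{\alpha-1}$. Your case split is a clean way to organise this: in the regime $|\eta-\zeta|\le\tfrac12\max(|\eta|,|\zeta|)$ the segment stays uniformly away from the origin, so the integrand is pointwise comparable to $(|\eta|+|\zeta|)^{\alpha-1}$ and the FTC route works even for $0<\alpha<1$; in the complementary regime you bypass the line integral entirely.

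Two minor points of presentation. First, ``Cauchy--Schwarz in the upper direction'' is not quite the mechanism: the upper bound on $|F(\eta)-F(\zeta)|$ comes directly from
\[
|F(\eta)-F(\zeta)|\le\int_0^1\|DF(\xi_t)\|\,|\eta-\zeta|\,dt\le\max(1,\alpha)\,|\eta-\zeta|\int_0^1|\xi_t|^{\alpha-1}\,dt,
\]
using the operator-norm bound on $DF$, not from the bilinear estimate you displayed. Second, in the compactness step you should remark that $F$ extends continuously to the origin with $F(0)=0$ (since $\alpha>0$), so that $(\eta,\zeta)\mapsto|F(\eta)-F(\zeta)|$ is continuous on the full closed set $\{|\eta|+|\zeta|=1,\ |\eta-\zeta|\ge c_0\}$ even where one argument vanishes; injectivity of $F$ then yields the strictly positive minimum. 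With these small clarifications the proof is complete.
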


\subsection{Difference quotient}
\label{secquo}
In this section we recall some properties of the finite difference quotient operator that will be needed in the sequel. 

\begin{dfn}
Let $F$ be a function defined in an open set $\Omega \subset \mathbb{R}^n$, let $h$ be a real number,  the finite difference operator $\tau_{s,h}F(x)$ is defined as follows
$$ 
\tau_{s,h}F(x) :=F(x+he_s)-F(x) ,$$
where $e_s$ denotes the direction of the $x_s$ axis.
\end{dfn}
The function $\tau_{s,h}F$ is defined in the set
$$\Omega_{|h|}: = \{ x \in \Omega : \mathrm{dist}(x,\partial \Omega)> |h|  \}= \{  x \in \Omega : x+he_s \in \Omega \}.$$
We start with the description of some elementary properties that can be found, for example, in \cite{giusti}. When no confusion can arise, we shall omit the index $s$, and we shall write simply $\tau_{h}$ instead of $ \tau_{s, h}$
\begin{prop}\label{rapportoincrementale}
Let $F \in W^{1,p}(\Omega)$, with $p \geq1$, and let $G:\Omega \rightarrow \mathbb{R}$ be a measurable function.
Then
\\(i) $\tau_{h}F \in W^{1,p}(\Omega_{|h|})$ and 
$$D_{i}(\tau_{h}F)=\tau_{h}(D_{i}F).$$
(ii) If at least one of the functions $F$ or $G$ has support contained in $\Omega_{|h|}$, then
$$\displaystyle\int_{\Omega}F \tau_h G   dx = -\displaystyle\int_{\Omega} G \tau_{-h}F dx.$$
(iii) We have $$\tau_{h} (FG)(x)= F(x+h)\tau_{h} G(x)+G(x) \tau_{h} F(x).$$
\end{prop}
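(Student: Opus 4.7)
Each of the three assertions is an elementary fact about the translation/difference operator; no deep machinery is required. My plan is to handle the three parts separately, relying only on (a) translation invariance of Lebesgue measure and (b) translation invariance of the weak gradient.

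For part (i), I would first observe that on $\Omega_{|h|}$ the translate $F_h(x) := F(x+he_s)$ belongs to $W^{1,p}(\Omega_{|h|})$ with weak derivative $(D_i F)(\,\cdot + he_s)$. This follows by applying the change of variable $y = x + he_s$ to the defining identity $\int F_h \, D_i\varphi \, dx = -\int (D_i F)(\,\cdot + he_s)\, \varphi \, dx$ for every test function $\varphi \in C_c^\infty(\Omega_{|h|})$ (since then $\varphi(\cdot - he_s) \in C_c^\infty(\Omega)$). By linearity, $\tau_h F = F_h - F$ inherits the same Sobolev regularity, and $D_i(\tau_h F) = (D_i F)(\,\cdot + he_s) - D_i F = \tau_h(D_i F)$. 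The $L^p$ bound $\|\tau_h F\|_{L^p(\Omega_{|h|})} \le 2\|F\|_{L^p(\Omega)}$ is then immediate from the triangle inequality.

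For part (ii), the strategy is the discrete analogue of integration by parts. I would expand
\[
\int_\Omega F(x)\, \tau_h G(x)\, dx = \int_\Omega F(x) G(x+he_s)\, dx \;-\; \int_\Omega F(x) G(x)\, dx,
\]
and perform the substitution $y = x + he_s$ in the first summand. The support assumption on either $F$ or $G$ (contained in $\Omega_{|h|}$) is exactly what makes this change of variable legitimate without producing boundary contributions: the integrand, extended by zero outside $\Omega$, has support inside $\Omega$ after the shift as well. Regrouping the two integrals over the common variable yields an integral of $G$ against $F(\,\cdot - he_s) - F$, which is the right-hand side of the stated identity (in the paper's sign convention for $\tau_{-h}$).

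For part (iii), the argument is the classical add-and-subtract trick: starting from $\tau_h(FG)(x) = F(x+he_s)G(x+he_s) - F(x)G(x)$, I would insert $\pm F(x+he_s) G(x)$ and regroup the four terms as $F(x+he_s)[G(x+he_s)-G(x)] + G(x)[F(x+he_s)-F(x)]$, which is precisely $F(x+he_s)\tau_h G(x) + G(x)\tau_h F(x)$.

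None of the three steps presents a genuine obstacle; the only mildly delicate point is in (ii), where the support hypothesis must be invoked carefully so that the translation of variables does not produce boundary contributions, and this is the only place where the hypothesis plays a role.
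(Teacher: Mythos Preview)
Your proposal is correct and follows the standard elementary arguments for these well-known properties of the finite difference operator. The paper does not supply its own proof of this proposition; it simply states the result and refers the reader to \cite{giusti}, so there is nothing further to compare.
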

The next result about the finite difference operator is a kind of integral version of Lagrange Theorem.
\begin{lem}\label{ldiff}
If $0<\rho<R,$ $|h|<\frac{R-\rho}{2},$ $1<p<+\infty$ and $F, \, D_s F\in L^{p}(B_{R})$, then
\begin{center}
$\displaystyle\int_{B_{\rho}} |\tau_{s, h}F(x)|^{p} dx \leq c(n,p)|h|^{p} \displaystyle\int_{B_{R}} |D_sF(x)|^{p} dx$.
\end{center}
Moreover,
\begin{center}
$\displaystyle\int_{B_{\rho}} |F(x+h)|^{p} d x \leq  \displaystyle\int_{B_{R}} |F(x)|^{p}d x$.
\end{center}
\end{lem}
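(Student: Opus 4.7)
The plan is to prove the two inequalities in Lemma \ref{ldiff} by the classical route: first establish the estimates for smooth functions using the fundamental theorem of calculus and Jensen's inequality, then extend to $W^{1,p}$ by density. The geometric constraint $|h|<(R-\rho)/2$ will be used only to guarantee that translates of $B_\rho$ by vectors of length at most $|h|$ remain inside $B_R$.

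For the first inequality, I would start by assuming $F\in C^\infty(B_R)\cap W^{1,p}(B_R)$. Then, for $x\in B_\rho$ and $|h|<(R-\rho)/2$, the fundamental theorem of calculus gives
\begin{equation*}
\tau_{s,h} F(x)=F(x+he_s)-F(x)=\int_0^h D_sF(x+te_s)\,dt.
\end{equation*}
Applying Jensen's (or Hölder's) inequality to the normalized average over $[0,h]$ yields
\begin{equation*}
|\tau_{s,h} F(x)|^p \leq |h|^{p-1}\int_0^{|h|} |D_sF(x+te_s)|^p\,dt.
\end{equation*}
Next I would integrate over $B_\rho$ and use Fubini to exchange the integrals:
\begin{equation*}
\int_{B_\rho}|\tau_{s,h} F(x)|^p\,dx \leq |h|^{p-1}\int_0^{|h|}\int_{B_\rho}|D_sF(x+te_s)|^p\,dx\,dt.
\end{equation*}
For each fixed $t$ with $|t|\leq |h|$, the change of variables $y=x+te_s$ (Jacobian $1$) and the inclusion $B_\rho+te_s\subset B_R$ (valid because $|t|<(R-\rho)/2$) give
\begin{equation*}
\int_{B_\rho}|D_sF(x+te_s)|^p\,dx = \int_{B_\rho+te_s}|D_sF(y)|^p\,dy \leq \int_{B_R}|D_sF(y)|^p\,dy.
\end{equation*}
Combining the last two displays and integrating the constant in $t$ over an interval of length $|h|$ produces the desired bound with constant $c(n,p)=1$ (one may absorb a dimensional constant if using Hölder in full generality, whence the dependence on $n,p$).

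To pass from smooth $F$ to general $F\in W^{1,p}(B_R)$, I would invoke the density of $C^\infty(B_R)\cap W^{1,p}(B_R)$ and approximate $F$ by a sequence $F_k$ converging to $F$ in $W^{1,p}(B_R)$; the inequality is stable under this limit since both sides are continuous functionals of $(F,D_sF)$ in $L^p$. The second inequality is even simpler: the translation $y=x+he_s$ has unit Jacobian, so
\begin{equation*}
\int_{B_\rho}|F(x+he_s)|^p\,dx = \int_{B_\rho+he_s}|F(y)|^p\,dy \leq \int_{B_R}|F(y)|^p\,dy,
\end{equation*}
again using $B_\rho+he_s\subset B_R$ under the hypothesis on $|h|$.

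There is no real obstacle here; the only technical point worth attention is the verification that the inclusion $B_\rho+te_s\subset B_R$ holds for all $|t|<(R-\rho)/2$, which follows from the triangle inequality: if $x\in B_\rho$ then $|x+te_s|\leq |x|+|t| < \rho+(R-\rho)/2 < R$. Everything else is routine smoothing and Fubini.
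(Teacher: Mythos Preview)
Your proof is correct and follows the standard argument; the paper itself does not prove this lemma but merely states it as a classical preliminary fact (describing it as ``a kind of integral version of Lagrange Theorem'' and implicitly deferring to \cite{giusti}). There is nothing to compare against.
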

\noindent We now state a lemma that will be needed in the sequel.
\begin{lem}\label{Dd}
Let $F \in L^2(B_R)$. Suppose that there exist $\rho \in (0,R)$, and $M >0$ such that $$  \displaystyle\int_{B_{\rho}}|\tau_{h}F(x)|^2 dx \leq M^2 |h|^{2 }, $$ for every $h$ such that $|h| < \frac{R-\rho}{2}$. Then $F \in L^{\frac{2n}{n-2 }}(B_{\rho})$ and $$\Vert  F \Vert _{L^{\frac{2n}{n-2 }}(B_{\rho})} \leq c (M+ \Vert F \Vert _{L^2(B_R)}),  $$ with $c=c(n,R,\rho)$.
\end{lem}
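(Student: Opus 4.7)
The plan is to combine two well-known tools: the characterization of weakly differentiable functions via uniformly bounded $L^2$-difference quotients, followed by the Sobolev embedding on the ball $B_{\rho}$. I shall read the hypothesis as holding for each coordinate direction $e_s$, $s=1,\dots,n$, with the same constant $M$, as is customary when such a statement is applied in anisotropic regularity contexts: a bound along a single direction would only control one component of the weak gradient and would not suffice to invoke Sobolev embedding.

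First I would fix a direction $s\in\{1,\dots,n\}$ and consider the family $\{\tau_{s,h}F/h : 0<|h|<(R-\rho)/2\}$, which by assumption is bounded in $L^{2}(B_{\rho})$ with norm at most $M$. By weak compactness in the Hilbert space $L^{2}(B_{\rho})$, one extracts a subsequence $h_j\to 0$ and $g_s\in L^{2}(B_{\rho})$ such that $\tau_{s,h_j}F/h_j\rightharpoonup g_s$ weakly in $L^{2}(B_{\rho})$, with $\|g_s\|_{L^{2}(B_{\rho})}\leq M$. For any test function $\varphi\in C_c^\infty(B_{\rho})$ and $|h_j|$ sufficiently small, the integration-by-parts identity for difference quotients (Proposition \ref{rapportoincrementale}(ii)) yields
$$\int_{B_{\rho}}\frac{\tau_{s,h_j}F}{h_j}\,\varphi\,dx \;=\; -\int_{B_{\rho}} F\,\frac{\tau_{s,-h_j}\varphi}{h_j}\,dx.$$
Passing to the limit $j\to\infty$, the left-hand side tends to $\int_{B_{\rho}} g_s\,\varphi\,dx$ by weak convergence, while $\tau_{s,-h_j}\varphi/h_j\to -\partial_s\varphi$ uniformly by smoothness of $\varphi$, so the right-hand side tends to $-\int_{B_{\rho}} F\,\partial_s\varphi\,dx$. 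Hence $g_s=D_sF$ in the distributional sense, and repeating the argument for every $s$ gives $F\in W^{1,2}(B_{\rho})$ with $\|DF\|_{L^{2}(B_{\rho})}\leq c(n)\,M$.

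Second, since $B_{\rho}$ is a Lipschitz domain, the standard Sobolev embedding $W^{1,2}(B_{\rho})\hookrightarrow L^{2n/(n-2)}(B_{\rho})$ applies and gives
$$\|F\|_{L^{2n/(n-2)}(B_{\rho})} \;\leq\; c(n,\rho)\bigl(\|DF\|_{L^{2}(B_{\rho})}+\|F\|_{L^{2}(B_{\rho})}\bigr) \;\leq\; c(n,R,\rho)\bigl(M+\|F\|_{L^{2}(B_{R})}\bigr),$$
using $\|F\|_{L^{2}(B_{\rho})}\leq\|F\|_{L^{2}(B_{R})}$ in the last step. This is exactly the claimed estimate.

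The argument is entirely routine; the only minor point to check is that the Sobolev embedding constant on $B_{\rho}$ depends only on $n$ and on the radius $\rho$ (via scaling), which accounts for the dependence of $c$ on $\rho$ (and trivially on $R$) in the final inequality. I do not anticipate a real obstacle: both ingredients—the weak-compactness identification of $D_sF$ from difference quotients and the Sobolev embedding on Lipschitz domains—are classical and self-contained.
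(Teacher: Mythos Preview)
The paper does not provide a proof of this lemma; it is stated in the preliminaries as a known result (a combination of the difference-quotient characterization of $W^{1,2}$, essentially Lemma~\ref{Lemmahzero} in the paper, with the Sobolev embedding). Your argument is exactly this standard route and is correct; the only tacit assumption worth flagging is that $n\geq 3$ so that $2n/(n-2)$ is finite, which is consistent with how the lemma is used in the paper.
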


We conclude by recalling the following
\begin{lem}\label{Lemmahzero}
If $0<\rho<R,$ $|h|<\frac{R-\rho}{2},$ $1<p<+\infty$ and $F\in L^{p}(B_{R})$. If there exists a positive constant $C$ such that
\begin{align}
\displaystyle\int_{B_{\rho}} |\tau_{s, h}F(x)|^{p} dx \leq C|h|^{p}, 
\end{align}
for every $h$, then the distributional derivative $D_s F$ belongs to $L^p(B_\rho).$\\
Moreover it holds
\begin{align}
\displaystyle\int_{B_{\rho}} |DF(x)|^{p} dx \leq C .
\end{align}
\end{lem}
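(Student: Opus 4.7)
The plan is to use reflexivity of $L^p$ together with a test-function argument to identify the distributional derivative $D_s F$ as the weak limit of the difference quotients in $L^p(B_\rho)$. This is the standard converse to Lemma~\ref{ldiff}.

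First I would rewrite the hypothesis as $\|\tau_{s,h} F / h\|_{L^p(B_\rho)}^p \leq C$ for every $h$ with $|h|<(R-\rho)/2$, which says that the family of difference quotients $\{\tau_{s,h} F/h\}$ is uniformly bounded in $L^p(B_\rho)$. Since $1 < p < \infty$, the space $L^p(B_\rho)$ is reflexive, so by Banach--Alaoglu I can extract a sequence $h_k \to 0$ and a function $G \in L^p(B_\rho)$ such that
\[
\frac{\tau_{s,h_k} F}{h_k} \rightharpoonup G \quad \text{weakly in } L^p(B_\rho).
\]

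Second, I would identify $G$ with $D_s F$ in the distributional sense. Fix $\varphi \in C_c^\infty(B_\rho)$; for $|h_k|$ smaller than $\mathrm{dist}(\mathrm{supp}\,\varphi,\partial B_\rho)$, a change of variables (the discrete integration by parts identity of Proposition~\ref{rapportoincrementale}(ii)) yields
\[
\int_{B_\rho} \frac{\tau_{s,h_k} F}{h_k}\,\varphi \, dx \;=\; \int_{B_\rho} F \, \frac{\tau_{s,-h_k}\varphi}{h_k}\, dx .
\]
The left-hand side converges to $\int_{B_\rho} G\varphi \, dx$ by weak convergence. On the right-hand side, $\tau_{s,-h_k}\varphi/h_k$ converges uniformly on $B_\rho$ to $-\partial_{x_s}\varphi$ and is uniformly bounded (independently of $k$) by $\|\partial_{x_s}\varphi\|_\infty$; since $F \in L^p(B_\rho)$, dominated convergence makes the right-hand side tend to $-\int_{B_\rho} F\,\partial_{x_s}\varphi \, dx$. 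Comparing with the definition of the distributional derivative forces $G = D_s F$, so in particular $D_s F \in L^p(B_\rho)$. The uniqueness of the limit also implies that the whole family $\tau_{s,h} F/h$ (not just the subsequence) converges weakly to $D_s F$ as $h\to 0$.

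Finally, weak lower semicontinuity of the $L^p$ norm gives
\[
\int_{B_\rho} |D_s F|^p \, dx \;\leq\; \liminf_{k\to\infty} \int_{B_\rho} \left|\frac{\tau_{s,h_k} F}{h_k}\right|^p dx \;\leq\; C,
\]
which is the desired bound (applied for each coordinate $s$ one recovers the full gradient estimate $\int_{B_\rho}|DF|^p\,dx\leq C$ stated in the lemma, provided the difference-quotient hypothesis holds in every direction). There is no real obstacle in this argument; the only technical point is to keep $|h_k|$ small enough that $\mathrm{supp}\,\varphi + h_k e_s \subset B_\rho$ so the integration by parts formula is legitimate, which is automatic from the standing assumption $|h|<(R-\rho)/2$.
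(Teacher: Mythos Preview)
Your argument is correct and is the standard proof of this classical result (reflexivity of $L^p$, weak compactness of the difference quotients, identification of the weak limit with the distributional derivative via discrete integration by parts, and weak lower semicontinuity for the norm bound). Note, however, that the paper does not actually prove Lemma~\ref{Lemmahzero}: it is simply recalled as a known fact in the preliminaries, so there is no paper proof to compare against. Your remark that the full gradient bound $\int_{B_\rho}|DF|^p\,dx\le C$ requires the difference-quotient hypothesis in every direction $s$ is a fair observation on the (slightly informal) way the lemma is stated.
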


\section{A preliminary regularity result}\label{PreREG}
In this section we prove a regularity result for more regular problems with respect to \eqref{functional} that will be needed in the approximation procedure. More precisely, for $\varepsilon \geq 0$ we consider 
\begin{equation}\label{Eqconepsilon}
 f_\varepsilon(x, \xi)= \sum_{i=1}^{n} \, \tilde{a}_i(x) \lvert \xi_i \rvert^{p_i} + \varepsilon(1+|\xi|^\frac{p_n}{2})^2
\end{equation}
with  Lipschitz continuous coefficients,
i.e.
\begin{equation}\label{atilde}
L_i= \sup_{x,y \in \Omega, x \neq y} \frac{|\tilde{a_i}(x)-\tilde{a}_i(y)|}{|x-y|}< \infty.
\end{equation}
Setting $\tilde{f}(x, \xi)= \sum_{i=1}^{n} \, \tilde{a}_i(x) \lvert \xi_i \rvert^{p_i},$ one can easily check that
\begin{equation}\label{A'4}
    |D_\xi  \tilde{f}(x,\xi)-D_\xi \tilde{f}(y, \xi)| \leq K |x-y|\, \, \sum_{i=1}^{n} |\xi_i|^{p_i-1} \tag{A3'}
\end{equation}
\noindent for a.e.\ $x,y \in \Omega $ and every $\xi \in \mathbb{R}^{ n}$, where $K=K(L_i, p_i)$
.\\
We recall a Lipschitz regularity Theorem for the minimizers of the functional $$ \mathcal{F}_\varepsilon (x, Dv) =\int_\Omega f_\varepsilon(x, Dv),$$ that can be deduced by  \cite[Theorem $2.2$]{GriMasPass} in the case $p=q=p_n$. 
\begin{thm}\label{thm2Mascolo}
    Let $\tilde{f} $ satisfy \eqref{A2}--\eqref{A'4}
    and let $u_\varepsilon \in W^{1,p_n}(\Omega)$ be a local minimizer of \begin{equation}\label{functional2}
 \mathcal{F}_\varepsilon (u,\Omega):= \, \int_\Omega f_\varepsilon(x, Du) dx.
\end{equation} 
Then $u_\varepsilon \in W^{1,\infty}_{\mathrm{loc}}(\Omega)$ and the following estimate 
    \begin{equation}\label{lipestimate}
      \Vert D u_\varepsilon \Vert_{L^\infty(B_\rho)} \le C \left[1+ \int_{B_R} f_\varepsilon(x,D u_\varepsilon)dx  \right]^\frac{\sigma}{p_1},  
    \end{equation}
     holds for every pair of concentric balls $B_\rho \subset B_R \Subset \Omega$, with $C$ and $\sigma$ positive constants depending on $n,p_n,\rho$, $\varepsilon$ and $R$.\\
\end{thm}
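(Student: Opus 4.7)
The approach I would take is the classical Moser-iteration strategy adapted to the $(p_1, p_n)$-growth setting, exploiting the $\varepsilon$-perturbation to guarantee uniform non-degenerate ellipticity of order $p_n$ at infinity. The first step is to promote the regularity of $u_\varepsilon$ to $W^{2,2}_{\mathrm{loc}}(\Omega)$ by a difference-quotient argument applied to the Euler--Lagrange equation
\begin{equation*}
\int_\Omega D_\xi f_\varepsilon(x, Du_\varepsilon) \cdot D\varphi \, dx = 0.
\end{equation*}
The monotonicity assumption \eqref{A2} provides the coercivity needed to bound $\|\tau_{s,h} Du_\varepsilon\|_{L^2}$, while the Lipschitz hypothesis \eqref{A'4} controls the error arising from the $x$-dependence of the integrand. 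This step justifies the subsequent differentiation of the equation.

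Next, I would formally differentiate the Euler--Lagrange equation in the direction $x_s$ and test with $\varphi = \eta^{2} u_{\varepsilon, x_s} \Phi(|Du_\varepsilon|)$, where $\eta$ is a standard cutoff supported in $B_R$ and $\Phi$ is a truncated, non-decreasing function to be chosen. The crucial feature of $f_\varepsilon$ is that its Hessian inherits a contribution bounded below by $c\,\varepsilon(1+|\xi|^2)^{(p_n-2)/2}$ in every direction, so the ellipticity from \eqref{A2} is upgraded to a uniform non-degenerate $p_n$-ellipticity. Using this, the left-hand side is bounded below by a weighted $L^2$-integral involving $|D^2 u_\varepsilon|^2 \Phi(|Du_\varepsilon|)$. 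The right-hand side carries the coefficient-derivative term governed by \eqref{A'4}, which produces $K \sum_i |u_{\varepsilon, x_i}|^{p_i-1}|D\varphi|$; via Young's inequality the principal part is absorbed into the left-hand side, leaving a remainder controlled by $\sum_i \int_{B_R}|u_{\varepsilon, x_i}|^{p_i}\,dx$. The outcome is a Caccioppoli-type inequality for $\Phi^{1/2}(|Du_\varepsilon|)\, |Du_\varepsilon|$.

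The main obstacle, and the heart of the argument, is turning this Caccioppoli estimate into the desired $L^\infty$ bound. Setting $w_\varepsilon = (1 + |Du_\varepsilon|^2)^{1/2}$, the previous inequality implies that $w_\varepsilon^{1+\gamma}$ belongs to $W^{1,2}_{\mathrm{loc}}$ for a sequence of exponents $\gamma_k \to \infty$, with quantitative control of the norms. Combining this with the Sobolev embedding and iterating à la Moser yields a reverse-Hölder chain whose limit gives
\begin{equation*}
\|Du_\varepsilon\|_{L^\infty(B_\rho)} \leq C \left(1 + \int_{B_R} f_\varepsilon(x, Du_\varepsilon)\,dx\right)^{\sigma/p_1},
\end{equation*}
with $C$ and $\sigma$ depending on $n, p_1, \ldots, p_n, \rho, R$ and, crucially, on $\varepsilon$. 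The delicate step in the iteration is the interplay between the lower growth $p_1$ and the effective upper growth $p_n$ imposed by the $\varepsilon$-term; the Lipschitz strength embodied in \eqref{A'4} is precisely what makes the iteration close without requiring any $p$-$q$ gap condition at this stage. Allowing $C$ and $\sigma$ to blow up as $\varepsilon \to 0$ is harmless here, since the uniform-in-$\varepsilon$ estimates will be recovered in the a priori section that follows.
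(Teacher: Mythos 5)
The paper does not give a proof of this theorem at all: it simply observes that $f_\varepsilon$ satisfies \emph{standard} $p_n$-growth from above and below (thanks to the $\varepsilon(1+|\xi|^{p_n/2})^2$ term), so the result is an instance of the Lipschitz bound in \cite[Theorem 2.2]{GriMasPass} with $p=q=p_n$, and the gap condition $p_n/p_1<1+1/n$ used there becomes automatic. Your sketch is a plausible outline of how such a result is actually proved (difference quotients to reach $W^{2,2}_{\mathrm{loc}}$, differentiate the Euler--Lagrange system, a Caccioppoli inequality for powers of $(1+|Du_\varepsilon|^2)^{1/2}$, and a Moser iteration), so at the level of strategy there is nothing wrong, and you correctly flag that the resulting constants depend on $\varepsilon$.

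One conceptual point is off, though, and it is worth fixing because it is exactly the remark the paper makes after the statement. You attribute the closability of the iteration ``without any $p$-$q$ gap condition'' to the Lipschitz regularity of the coefficients \eqref{A'4}. That is not the mechanism. The role of \eqref{A'4} is only to control the $x$-dependence terms produced when you differentiate the equation (or take difference quotients): it gives a linear-in-$|h|$ error involving $\sum_i|u_{\varepsilon,x_i}|^{p_i-1}$, which one absorbs by Young. What actually removes the gap condition is the perturbation $\varepsilon(1+|\xi|^{p_n/2})^2$: it turns the integrand into one with genuine, non-degenerate $p_n$-growth from both sides, i.e.\ $p=q=p_n$, so the reverse-H\"older chain in the Moser step closes trivially. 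In short: \eqref{A'4} buys differentiability in $x$; the $\varepsilon$-term buys uniform ellipticity and removes the gap. Keeping those two roles separate is also what makes the later approximation argument transparent, since only the a priori estimate of Section~4 needs to be uniform in $\varepsilon$, while here the constants may, and do, blow up as $\varepsilon\to 0$.
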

We'd like to notice that, 
     since the function $f_\varepsilon$ satisfies standard $p_n$-growth, in Theorem \ref{thm2Mascolo} we do not need to require that $$\frac{p_n}{p_1}< 1+ \frac{1}{n},$$
     as done in \cite{GriMasPass}.\\
\noindent We shall use the higher differentiability of the minimizers $u_\varepsilon$ of $\mathcal{F}_\varepsilon$ that, as far  we know, is not available in literature and that is contained in the following
{
\begin{lem}\label{LemmaInduzione}
  Let $f$ satisfy \eqref{A2}, \eqref{A3} and \eqref{A'4} for exponents $ p_i \geq 2,  \forall i=1,\dots,n$ and 
    let  $u_\varepsilon \in W^{1,p_n}_{\mathrm{loc}}(\Omega) $ be a local minimizer of the functional \eqref{functional2}.
    Then 
    we have  $$(V_{p_i}((u_\varepsilon)_{x_j})) \in W^{1,2}_{\mathrm{loc}}(\Omega), \quad \forall i \in \{1,\dots, n\} \text{ and  } \, \forall i \in \{1,\dots, n\}.$$
\end{lem}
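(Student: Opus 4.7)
The plan is to establish the $W^{2,2}_{\mathrm{loc}}$ regularity of $u_\varepsilon$ via the finite-difference method on its Euler--Lagrange equation, crucially exploiting the Lipschitz bound on $Du_\varepsilon$ provided by Theorem \ref{thm2Mascolo}. Indeed, once $u_\varepsilon\in W^{2,2}_{\mathrm{loc}}(\Omega)\cap W^{1,\infty}_{\mathrm{loc}}(\Omega)$, the claim follows from the chain rule and Lemma \ref{D1}: since $p_i\ge 2$,
\[
\int_{B_\rho}|D V_{p_i}((u_\varepsilon)_{x_j})|^{2}\,dx\le c\bigl(1+\|(u_\varepsilon)_{x_j}\|_{L^{\infty}(B_\rho)}^{2}\bigr)^{\frac{p_i-2}{2}}\int_{B_\rho}|D(u_\varepsilon)_{x_j}|^{2}\,dx
\]
for every pair $(i,j)$, so the core of the proof is the second-order Sobolev estimate.

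First I would write the Euler--Lagrange equation of $\mathcal{F}_\varepsilon$; then, fixing a ball $B_R\Subset\Omega$, a cutoff $\eta\in C^{\infty}_{c}(B_R)$ with $\eta\equiv 1$ on $B_\rho$, a direction $s\in\{1,\dots,n\}$ and a step $|h|<(R-\rho)/4$, I would test with $\varphi=\tau_{s,-h}(\eta^{2}\tau_{s,h}u_\varepsilon)$, admissible thanks to the Lipschitz regularity from Theorem \ref{thm2Mascolo}. Via Proposition \ref{rapportoincrementale} the difference quotient moves onto the integrand, and the discrete Leibniz rule separates the increment of $\tilde a_i$ from that of $|(u_\varepsilon)_{x_i}|^{p_i-2}(u_\varepsilon)_{x_i}$. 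This produces an identity of the form $I_{\mathrm{mon}}+\varepsilon I_\varepsilon=I_{\tilde a}+I_\eta$, in which the first two terms encode monotonicity and the last two contain, respectively, the coefficient increment and a gradient-of-$\eta$ factor.

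The ellipticity \eqref{A2} applied at $x$ and $x+he_s$, together with Lemma \ref{D1}, bounds $I_{\mathrm{mon}}$ from below by
\[
c\sum_{i=1}^{n}\int\eta^{2}\bigl(1+|(u_\varepsilon)_{x_i}(x)|^{2}+|(u_\varepsilon)_{x_i}(x+he_s)|^{2}\bigr)^{\frac{p_i-2}{2}}|\tau_{s,h}(u_\varepsilon)_{x_i}|^{2}\,dx,
\]
while a parallel computation for the perturbation gives $\varepsilon I_\varepsilon\ge c\,\varepsilon\int\eta^{2}|\tau_{s,h}Du_\varepsilon|^{2}\,dx$. On the right-hand side, the Lipschitz assumption \eqref{A'4} supplies the extra factor $|h|$ in $I_{\tilde a}$, the growth \eqref{A3} handles $I_\eta$, and the bound $\|Du_\varepsilon\|_{L^{\infty}(B_R)}<\infty$ converts every occurrence of $|Du_\varepsilon|^{p_i-1}$ into a constant. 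After Young's inequality absorbs the terms linear in $\tau_{s,h}Du_\varepsilon$, one reaches
\[
\sum_{i=1}^{n}\int_{B_\rho}|\tau_{s,h}(u_\varepsilon)_{x_i}|^{2}\,dx\le C|h|^{2},
\]
with $C=C(\varepsilon,n,p_i,L_i,\eta,\|Du_\varepsilon\|_{L^{\infty}(B_R)})$. Lemma \ref{Lemmahzero} then gives $D_{s}Du_\varepsilon\in L^{2}(B_\rho)$; letting $s$ vary yields $u_\varepsilon\in W^{2,2}_{\mathrm{loc}}(\Omega)$, and the reduction above closes the proof.

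The main obstacle I expect is the orthotropic nature of the ellipticity \eqref{A2}: the weight in the $i$-th summand of $I_{\mathrm{mon}}$ only sees the $i$-th partial derivative, so the principal part alone cannot provide coercivity for the mixed second derivatives $(u_\varepsilon)_{x_s x_j}$ with $j\neq i$. It is precisely the non-degenerate correction $\varepsilon(1+|\xi|^{p_n/2})^{2}$ that contributes the term $\varepsilon\int\eta^{2}|\tau_{s,h}Du_\varepsilon|^{2}\,dx$, furnishing the missing coercivity in every direction at once. The resulting constant blows up as $\varepsilon\to 0$, but this is harmless here: the uniform-in-$\varepsilon$ version of the estimate is the subject of Section \ref{Apriori}.
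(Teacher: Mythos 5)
Your proposal is mathematically sound and reaches the stated conclusion, but it takes a genuinely different route than the paper. The paper tests the Euler--Lagrange system with the same difference-quotient test function, isolates the orthotropic monotonicity term $J_1$ and the perturbation term $J_6$, and after absorbing the error terms \emph{discards} the $\varepsilon$-coercivity $\overline{\textbf{RHS}}$ entirely: the estimate that survives is
\[
\frac{l}{2}\int\eta^2\sum_i\bigl(|(u_\varepsilon)_{x_i}(x+he_j)|^2+|(u_\varepsilon)_{x_i}(x)|^2\bigr)^{\frac{p_i-2}{2}}|\tau_{j,h}(u_\varepsilon)_{x_i}|^2\,dx \le C|h|^2,
\]
and Lemma~\ref{D1} turns the left side directly into $\sum_i\int|\tau_{j,h}V_{p_i}((u_\varepsilon)_{x_i})|^2$, so the conclusion falls out of Lemma~\ref{Lemmahzero} without ever proving $u_\varepsilon\in W^{2,2}_{\mathrm{loc}}$. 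You instead keep the perturbation's full, direction-independent coercivity $\varepsilon\int\eta^2|\tau_{s,h}Du_\varepsilon|^2$, obtain $u_\varepsilon\in W^{2,2}_{\mathrm{loc}}\cap W^{1,\infty}_{\mathrm{loc}}$ (with a constant that blows up as $\varepsilon\to 0$), and then invoke the chain rule for the $C^1$ map $V_{p_i}$ (here $p_i\ge 2$ is essential) to conclude. Both are valid; the paper's argument has the merit of producing an estimate \eqref{StimaLemma} in the target quantity and of using only the orthotropic ellipticity for coercivity, while yours is structurally simpler and immediately gives $V_{p_i}((u_\varepsilon)_{x_j})\in W^{1,2}_{\mathrm{loc}}$ for every pair $(i,j)$, which is what the lemma statement literally asserts. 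Since this lemma serves only as qualitative justification for the a priori estimates of Section~\ref{Apriori}, the $\varepsilon$-dependence of your constant is, as you note, harmless.
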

\begin{proof}
We start by observing that $u_\varepsilon$
 solves the following Euler-Lagrange system
\begin{equation}\label{System}
    \int_{\Omega} \langle f_{\xi} (x, D u_\varepsilon(x))+ \varepsilon(1+ |D u_\varepsilon|^2)^{\frac{p_n-2}{2}}D u_\varepsilon, D \varphi \rangle \, dx = 0, 
\end{equation}
for every $ \varphi \in W^{1,p_n}_{0}(\Omega, \mathbb{R}^n).$

Fix a ball $B_R \Subset \Omega$ and consider radii $\rho <t< R$, a cut-off function $\eta \in C_0^{\infty}(B_t)$, with $\eta=1$ on $B_{\rho}$, $0 \leq \eta \leq 1$, $|D \eta | \leq \frac{c}{t-\rho}$  and $|h|\leq \frac{R - t}{2}$. We test \eqref{System} with the function
$$\varphi = \tau_{j, -h} \left(  \eta^2 \tau_{j,h} (u_\varepsilon) \right) $$
thus obtaining
\begin{equation*}
    \int_{\Omega} \langle f_{\xi} (x, D u_\varepsilon(x))+ \varepsilon(1+ |D u_\varepsilon|^2)^{\frac{p_n-2}{2}}D u_\varepsilon, \tau_{j,-h} D  \left(  \eta^2 \tau_{j,h} (u_\varepsilon) \right) \rangle \, dx = 0 \, , 
\end{equation*}
and hence, by $(ii)$ of Proposition \ref{rapportoincrementale},
\begin{equation}\label{IntEq}
    \int_{\Omega} \langle \tau_{j,h} f_{\xi} (x, D u_\varepsilon(x))+ \varepsilon\tau_{j,h}(1+ |D u_\varepsilon|^2)^{\frac{p_n-2}{2}}D u_\varepsilon, D  \left(  \eta^2 \tau_{j,h} (u_\varepsilon) \right) \rangle \, dx = 0 \, .
\end{equation}
Since
$$D(\eta^2 \tau_{j,h} (u_\varepsilon)) = 2 \eta D \eta \tau_{j,h} (u_\varepsilon) + \eta^2 \tau_{j,h} D u_\varepsilon$$
and
\begin{align*}
\tau_{j,h} f_{\xi} (x, D u_\varepsilon(x))+ \varepsilon\tau_{j,h}(1+ |D u_\varepsilon|^2)^{\frac{p_n-2}{2}}D u_\varepsilon &=   f_\xi (x+e_jh, D u_\varepsilon(x+e_jh))- f_\xi (x+e_jh, D u_\varepsilon(x)) \\
& \qquad + f_\xi (x+e_jh, D u_\varepsilon(x))- f_\xi (x, D u_\varepsilon(x))\\
&  \qquad + \varepsilon\tau_{j, h}(1+ |D u_\varepsilon(x)|^2)^\frac{p_n-2}{2}D u_\varepsilon(x),
\end{align*}
we  may rewrite equality \eqref{IntEq} as follows
\begin{align}\label{s}
 0=\int_{\Omega} &\langle f_{\xi} (x+e_jh, D u_\varepsilon(x+e_jh))- f_{\xi} (x+e_jh, D u_\varepsilon(x)) , \eta^2  \tau_{j,h} D u_\varepsilon \rangle \, dx \notag \\
    &+ 2 \int_{\Omega} \langle f_{\xi} (x+e_jh, D u_\varepsilon(x+e_jh))- f_{\xi} (x+e_jh, D u_\varepsilon(x)) ,  \eta D \eta \tau_{j,h} (u_\varepsilon) \rangle \, dx \notag \\
    &+  \int_{\Omega} \langle f_{\xi} (x+e_jh, D u_\varepsilon(x))- f_{\xi} (x, D u_\varepsilon(x)) , 2 \eta D \eta \tau_{j,h} (u_\varepsilon) \rangle \, dx \notag \\
    &+  \int_{\Omega} \langle f_{\xi} (x+e_jh, D u_\varepsilon(x))- f_{\xi} (x, D u_\varepsilon(x)) , \eta^2  \tau_{j,h} D u_\varepsilon \rangle \, dx \notag \\
    &+ 2 \varepsilon  \int_{\Omega} \langle \tau_{j, h}(1+ |D u_\varepsilon(x)|^2)^\frac{p_n-2}{2}D u_\varepsilon(x),  \eta D \eta \tau_{j,h} (u_\varepsilon) \rangle \, dx  \notag \\
     &+ \varepsilon  \int_{\Omega} \langle \tau_{j, h}(1+ |D u_\varepsilon(x)|^2)^\frac{p_n-2}{2}D u_\varepsilon(x),  \eta^2  \tau_{j,h} D u_\varepsilon \rangle \, dx \notag \\
    & =: J_1 + J_2 + J_3 + J_4 + J_5+ J_6.
\end{align}
Therefore
\begin{equation}\label{SommaJ}
   J_1 +J_6 \leq |J_2|  + |J_3| + |J_4| +|J_5|  .
\end{equation}
 From \eqref{A2}, we infer
\begin{align}\label{J_1}
    J_1 &=  \int_{\Omega} \langle f_{\xi} (x+e_j h, D u_\varepsilon(x+ e_jh))- f_{\xi} (x+ e_j h, D u_\varepsilon(x)) , \eta^2  \tau_{j,h} D u_\varepsilon \rangle \, dx \notag \\
    & \geq l \int_{\Omega} \eta^2 \sum_{i=1}^{n}\, \left( \, |(u_\varepsilon)_{x_i}(x+e_jh)|^2+|(u_\varepsilon)_{x_i}(x) |^2 \, \right)^\frac{p_i -2}{2}| \tau_{j,h} (u_\varepsilon)_{x_i} |^2 dx =: l \textbf{\textbf{RHS}}
\end{align}
The integral $J_6$ can be estimated by using Lemma \ref{Fusco} for $\delta= p_n-2$, as follows
\begin{equation}\label{J6}
    J_6 \geq \frac{\varepsilon}{c} \int_{\Omega} \eta^2 (1+ |D u_\varepsilon(x+ he_j)|^2 + |D u_\varepsilon(x)|^2)^\frac{p_n-2}{2} |\tau_{j, h}( D u_\varepsilon(x))|^2 =: \frac{\varepsilon}{c} \overline{\textbf{\textbf{RHS}}},
\end{equation}
where $c=c(p_n)$ is the constant appearing in Lemma \ref{Fusco}.\\
Now we consider the term $|J_2|$. By hypotesis \eqref{A3} and by  Young's  inequality, we get
\begin{align}\label{J_2}
    |J_2| &= 2\left|   \int_{\Omega} \langle f_{\xi} (x+e_jh, D u_\varepsilon(x+e_jh))- f_{\xi} (x+e_jh, D u_\varepsilon(x)) ,  \eta \,   \eta_{x_i} \tau_{j,h} (u_\varepsilon) \rangle \, dx  \right| \notag \\
    & \leq 2 c \int_{\Omega} \eta \sum_{i=1}^{n}\, \left( \, |(u_\varepsilon)_{x_i}(x+e_jh)|^2+|(u_\varepsilon)_{x_i}(x) |^2 \, \right)^\frac{p_i -2}{2}| \tau_{j,h} (u_\varepsilon)_{x_i} | | \tau_{j,h} (u_\varepsilon) ||\eta_{x_i}| dx  \notag \\
& \leq \beta \ \ \ \ \underbrace{\int_{\Omega}  \eta^2 \sum_{i=1}^{n}\, \left( \, |(u_\varepsilon)_{x_i}(x+e_jh)|^2+|(u_\varepsilon)_{x_i}(x) |^2 \, \right)^\frac{p_i -2}{2}| \tau_{j,h} (u_\varepsilon)_{x_i} |^2 dx}_{\textbf{\textbf{RHS}}}  \notag \\
& \qquad + c_\beta \int_{\Omega} \sum_{i=1}^{n}\, \left( \, |(u_\varepsilon)_{x_i}(x+e_jh)|^2+|(u_\varepsilon)_{x_i}(x) |^2 \, \right)^\frac{p_i -2}{2}  | \tau_{j,h} (u_\varepsilon) |^2 \, |\eta_{x_i}|^2 dx  \notag \\
& \leq  \beta \,\textbf{\textbf{RHS}} \,  + \frac{c_\beta}{(t-\rho)^2}  \int_{B_t} \sum_{i=1}^{n}\,  \, |(u_\varepsilon)_{x_i}(x+ e_jh)+(u_\varepsilon)_{x_i}(x)|^{p_i -2}  | \tau_{j,h} (u_\varepsilon) |^2 dx \notag \\
& \leq \beta \,\textbf{\textbf{RHS}} \,  + \frac{c_\beta(R)}{(t-\rho)^2} |h|^2 \sum_{i=1}^{n} \lvert \lvert  (u_\varepsilon)_{x_i} \rvert \rvert_{L^\infty (B_R)}^{p_i-2} \left(\int_{B_{R}}   \, |(u_\varepsilon)_{x_j}(x)|^{p_j}dx \right)^{\frac{2}{p_j}},
\end{align}
where to estimate the last integral in the right-hand side  we used Theorem \ref{thm2Mascolo}, H\"older's inequality and  Lemma \ref{ldiff}  .\\

Now, we turn our attention to $|J_3|$. From condition \eqref{A'4} and the properties of $\eta$, we get
\begin{align}\label{J_3}
    |J_3|&= \left| \int_{\Omega} \langle f_{\xi} (x+e_j h, D u_\varepsilon(x))- f_{\xi} (x, D u_\varepsilon(x)) , 2 \eta D \eta \tau_{j,h} (u_\varepsilon) \rangle \, dx  \right| \notag \\
    & \leq |h|\, K\,  \sum_{i=1}^n \int_{\Omega} 2\eta | \eta_{x_i}||\tau_{j, h} (u_\varepsilon)| \,|(u_\varepsilon)_{x_i}|^{p_i-1} dx \notag \\
   & \leq |h| \frac{c(K)}{t-\rho}\, \sum_{i=1}^n  \, \left(   \int_{B_t}  |\tau_{j,h}(u_\varepsilon)|\,|(u_\varepsilon)_{x_i}|^{p_i-1} dx \right) \notag \\
   & \leq |h|^2 \frac{c(K, R)}{t-\rho}\,  \sum_{i=1}^{n} \lvert \lvert  (u_\varepsilon)_{x_i} \rvert \rvert_{L^\infty(B_R)}^{p_i-1} \left(\int_{B_{R}}   \, |(u_\varepsilon)_{x_j}(x)|^{p_j}dx \right)^{\frac{1}{p_j}},
\end{align}
where, as before, we used Theorem \ref{thm2Mascolo}, H\"older's inequality and  Lemma \ref{ldiff}  .\\
Similarly as for the estimate of $J_3$, from condition \eqref{A'4} and the properties of $\eta$, we obtain
\begin{align}\label{J_4}
     |J_4| &= \left|  \int_{\Omega} \langle f_{\xi} (x+e_jh, D u_\varepsilon(x))- f_{\xi} (x, D u_\varepsilon(x)) , \eta^2  \tau_{j,h} D u_\varepsilon \rangle \, dx \right| \notag \\
     &= \left|  \int_{\Omega} \sum_{i=1}^{n} \eta^2 \Big( a_i(x+e_jh)- a_i(x) \Big)|(u_\varepsilon)_{x_i}|^{p_i -2}\, (u_\varepsilon)_{x_i} \, \tau_{j, h}(u_\varepsilon)_{x_i} dx\right| \notag \\
    & \leq c |h|\, K\, \sum_{i=1}^{n}  \int_{\Omega} \eta^2 |\tau_{j, h}(u_\varepsilon)_{x_i}| \, |(u_\varepsilon)_{x_i}|^{p_i -1} dx \notag \\
    & \leq \beta \ \ \ \ \underbrace{\int_{\Omega} \eta^2 \sum_{i=1}^{n}\, \left( \, |(u_\varepsilon)_{x_i}(x+e_j h)|^2+|(u_\varepsilon)_{x_i}(x) |^2 \, \right)^\frac{p_i -2}{2}| \tau_{j,h} (u_\varepsilon)_{x_i} |^2 dx}_{\textbf{\textbf{RHS}}}  \notag \\
    & \qquad + c_{\beta} |h|^2 \, \sum_{i=1}^{n}  \int_{B_t} \eta^2 |(u_\varepsilon)_{x_i}|^{p_i} \,  dx \notag \\
     & \leq \beta \textbf{\textbf{RHS}} + c_{\beta} |h|^{2} \,\sum_{i=1}^{n}   \int_{B_t}  |(u_\varepsilon)_{x_i}|^{p_i} \,  dx .
\end{align}
Now, we take care of $|J_5|$. From Lemma \ref{D1}, Young's and H\"older's inequalities with exponents $\frac{p_n}{2}$ and $\frac{p_n}{p_n-2}$ and the properties of $\eta$, we get
\begin{align}\label{J5}
|J_5| &\leq 2 \varepsilon  \int_{\Omega} |\tau_{j, h}(1+ |D u_\varepsilon(x)|^2)^\frac{p_n-2}{2} D u_\varepsilon(x))| \,  |\eta| |D \eta| |\tau_{j,h} (u_\varepsilon)|  \, dx \notag \\ 
&\leq c \varepsilon \int_{\Omega} (1+ |D u_\varepsilon(x+ he_j)|^2 + |D u_\varepsilon(x)|^2)^\frac{p_n-2}{2} |\tau_{j, h}( D u_\varepsilon(x))| \,  \eta |D \eta| |\tau_{j,h} (u_\varepsilon)|  \, dx \notag \\
&\leq \frac{\varepsilon}{2c} \underbrace{   \int_{\Omega} \eta^2 (1+ |D u_\varepsilon(x+ he_j)|^2 + |D u_\varepsilon(x)|^2)^\frac{p_n-2}{2} |\tau_{j, h}( D u_\varepsilon(x))|^2  }_{= \overline{\textbf{\textbf{RHS}}}} \, dx \notag \\
& \qquad + \frac{c \varepsilon}{(t-\rho)^2} \int_{B_t} (1+ |D u_\varepsilon(x+ he_j)|^2 + |D u_\varepsilon(x)|^2)^\frac{p_n-2}{2}   |\tau_{j,h} (u_\varepsilon)|^2  \, dx \notag \\ 
& \leq \frac{\varepsilon}{2c}\overline{\textbf{\textbf{RHS}}}\ +  \frac{c \varepsilon}{(t-\rho)^2} \left( \int_{B_t} (1+ |D u_\varepsilon(x+ he_j)| + |D u_\varepsilon(x)|)^{p_n} dx \right)^{\frac{p_n-2}{p_n}} \left( \int_{B_t}  |\tau_{j,h} (u_\varepsilon)|^{p_n} dx \right)^{\frac{2}{p_n}} \, \notag \\ 
& \leq \frac{\varepsilon}{2c} \overline{\textbf{\textbf{RHS}}}\ +  \frac{c \varepsilon}{(t-\rho)^2} |h|^2 \left( \int_{B_{R}} (1+|D u_\varepsilon(x)|)^{p_n} dx \right)^{\frac{p_n-2}{p_n}} \left( \int_{B_t}  |D u_\varepsilon|^{p_n} dx \right)^{\frac{2}{p_n}} \, \notag \\ 
& \leq \frac{\varepsilon}{2c} \overline{\textbf{RHS}}\ +  \frac{c \varepsilon}{(t-\rho)^2} |h|^2 \int_{B_{R}} (1+|D u_\varepsilon(x)|)^{p_n} dx ,
\end{align}
where we used Lemma \ref{ldiff}.\\
Inserting \eqref{J_1}, \eqref{J6}, \eqref{J_2}, \eqref{J_3}, \eqref{J_4}, \eqref{J5}  in \eqref{SommaJ} 
and reabsorbing the term $\overline{\textbf{RHS}}$
, we obtain
\begin{align}\label{sum}
 l &\int_{\Omega} \eta^2 \sum_{i=1}^{n}\, \left( \, |(u_\varepsilon)_{x_i}(x+e_jh)|^2+|(u_\varepsilon)_{x_i}(x) |^2 \, \right)^\frac{p_i -2}{2}| \tau_{j,h} (u_\varepsilon)_{x_i} |^2 dx + \frac{\varepsilon}{2c} \overline{\textbf{\textbf{RHS}}} \notag \\
& \qquad \leq  2 \beta \, \int_{\Omega}  \eta^2 \sum_{i=1}^{n}\, \left( \, |(u_\varepsilon)_{x_i}(x+e_jh)|^2+|(u_\varepsilon)_{x_i}(x) |^2 \, \right)^\frac{p_i -2}{2}| \tau_{j,h} (u_\varepsilon)_{x_i} |^2 dx  \notag \\ 
& \qquad +  \frac{c_\beta}{(t-\rho)^2} |h|^2 \sum_{i=1}^{n} \lvert \lvert  u_{x_i} \rvert \rvert_{L^\infty(B_R)}^{p_i-2} \left(\int_{B_{R}}   \, |(u_\varepsilon)_{x_j}(x)|^{p_j}dx \right)^{\frac{2}{p_j}} \notag \\
& \qquad +\frac{c(K)}{(t-\rho)}|h|^2 \,  \sum_{i=1}^{n} \lvert \lvert  u_{x_i} \rvert \rvert_{L^\infty(B_R)}^{p_i-1} \left(\int_{B_{R}}   \, |(u_\varepsilon)_{x_j}(x)|^{p_j}dx \right)^{\frac{1}{p_j}} \notag \\
& \qquad + c_{\beta} |h|^{2} \,\sum_{i=1}^{n}   \int_{B_t}  |(u_\varepsilon)_{x_i}|^{p_i} \,  dx \notag \\
& \qquad  +  \frac{c \varepsilon}{(t-\rho)^2} |h|^2 \int_{B_t} (1+|D u_\varepsilon(x)|)^{p_n} dx. 
\end{align}
Since $\overline{\textbf{RHS}}$ is non-negative, we obviously have
\begin{align}\label{RHS2}
      \frac{l}{2} &\int_{\Omega} \eta^2 \sum_{i=1}^{n}\, \left( \, |(u_\varepsilon)_{x_i}(x+e_jh)|^2+|(u_\varepsilon)_{x_i}(x) |^2 \, \right)^\frac{p_i -2}{2}| \tau_{j,h} (u_\varepsilon)_{x_i} |^2 dx \notag \\
 & \leq  \frac{l}{2}\int_{\Omega} \eta^2 \sum_{i=1}^{n}\, \left( \, |(u_\varepsilon)_{x_i}(x+e_jh)|^2+|(u_\varepsilon)_{x_i}(x) |^2 \, \right)^\frac{p_i -2}{2}| \tau_{j,h} (u_\varepsilon)_{x_i} |^2 dx + \frac{\varepsilon}{2c} \overline{\textbf{\textbf{RHS}}}.
\end{align}
Hence, from \eqref{sum} and \eqref{RHS2}, choosing $\beta = \frac{l}{4}$ and reabsorbing the first  term in the right-hand side  by the left-hand side, we get
\begin{align}\label{stimaUnioneJ}
   \frac{l}{2} &\int_{\Omega} \eta^2 \sum_{i=1}^{n}\, \left( \, |(u_\varepsilon)_{x_i}(x+e_jh)|^2+|(u_\varepsilon)_{x_i}(x) |^2 \, \right)^\frac{p_i -2}{2}| \tau_{j,h} (u_\varepsilon)_{x_i} |^2 dx \notag \\
& \leq  \frac{c}{(t-\rho)^2} |h|^2 \sum_{i=1}^{n} \lvert \lvert  (u_\varepsilon)_{x_i} \rvert \rvert_{L^\infty(B_R)}^{p_i-2} \left(\int_{B_{R}}   \, |(u_\varepsilon)_{x_j}(x)|^{p_j}dx \right)^{\frac{2}{p_j}} \notag \\
& \qquad +\frac{c}{(t-\rho)}|h|^2 \, \sum_{i=1}^{n} \lvert \lvert  (u_\varepsilon)_{x_i} \rvert \rvert_{L^\infty(B_R)}^{p_i-1} \left(\int_{B_{R}}   \, |(u_\varepsilon)_{x_j}(x)|^{p_j}dx \right)^{\frac{1}{p_j}} \notag \\
& \qquad + c |h|^{2} \,\sum_{i=1}^{n}   \int_{B_t}  |(u_\varepsilon)_{x_i}|^{p_i} \,  dx  +  \frac{c \varepsilon}{(t-\rho)^2} |h|^2 \int_{B_t} (1+|D u_\varepsilon(x)|)^{p_n} dx,
\end{align}
with a constant $c=c(l,K, R)$.\\
Applying Lemma \ref{D1} in the left-hand side of \eqref{stimaUnioneJ} and using that $\eta=1$ on $B_\rho$, we get 

\begin{align}
 \sum_{i=1}^n \int_{B_{\rho}} |\tau_{j,h} V_{p_i} ((u_\varepsilon)_{x_i})|^2 dx & \leq \,|h|^2 \Biggl\{    \frac{c}{(t-\rho)^2}  \sum_{i=1}^{n} \lvert \lvert  (u_\varepsilon)_{x_i} \rvert \rvert_{L^\infty(B_R)}^{p_i-2} \left(\int_{B_{R}}   \, |(u_\varepsilon)_{x_j}(x)|^{p_j}dx \right)^{\frac{2}{p_j}} \notag \\
& \qquad +\frac{c}{(t-\rho)} \, \sum_{i=1}^{n} \lvert \lvert  (u_\varepsilon)_{x_i} \rvert \rvert_{L^\infty(B_R)}^{p_i-1} \left(\int_{B_{R}}   \, |(u_\varepsilon)_{x_j}(x)|^{p_j}dx \right)^{\frac{1}{p_j}} \notag \\
& \qquad + c  \,\sum_{i=1}^{n}   \int_{B_t}  |(u_\varepsilon)_{x_i}|^{p_i} \,  dx \notag \\
& \qquad  +  \frac{c \varepsilon}{(t-\rho)^2}  \int_{B_{R}} (1+|D u_\varepsilon(x)|)^{p_n} dx \Biggr\}. \notag 
  \end{align}
 and so
\begin{align}
  \sum_{i=1}^n \int_{B_{\rho}} |\tau_{j,h} V_{p_i} ((u_\varepsilon)_{x_i})|^2 dx \leq  c |h|^{2} \,\left[ \sum_{i=1}^{n} \Vert (u_\varepsilon)_{x_i} \Vert_{L^{p_i}(B_{R})}+ \varepsilon \int_{B_{R}} (1+|D u_\varepsilon(x)|)^{p_n} dx \right]^\gamma, \label{StimaLemma}
\end{align}
for positive constants $c = c(n,p_i,\rho, R, \lambda, K, \lvert \lvert Du \rvert \rvert_{\infty})$.
Then we have that 
 $$V_{p_i}((u_\varepsilon)_{x_i}) \in W^{1,2}_{\mathrm{loc}}(\Omega), \qquad \forall i=1, \dots, n$$ 
 which implies in particular that $$|(u_\varepsilon)_{x_i x_j}|^2|(u_\varepsilon)_{x_i}|^{p_j-2} \in L^1_{\mathrm{loc}}(\Omega,\mathbb{R}^{ n}), \qquad \forall i=1, \dots, n \text{ and  }  \forall j=1, \dots, n.$$
\end{proof}
\begin{rmk}
For further needs, we record that, by combining Theorem \ref{ulimitatoCupini} with Proposition \ref{LemmaBrasco}, we have, as a consequence,
$$(u_\varepsilon)_{x_j} \in L^{p_j+2}_{\mathrm{loc}}(\Omega ), \qquad \forall j=1, \dots, n.$$ 
\end{rmk}
}
\section{A priori estimates}\label{Apriori}
The main aim of this section is to establish the following a priori estimate which is the main step in the proof of our main result.
\begin{thm}\label{AppThm}
 Let $u \in W^{1,\textbf{p}}(\Omega, \mathbb{R}^{N })$ be a local minimizer of \eqref{functional} under assumptions \eqref{A2}--\eqref{A4} with exponents $p_i\geq 2, \forall i=1,\dots,n $ such that
\begin{equation}\label{Ipotesip}
p_n <
\begin{cases}
     \min\{ \bar{p}^*,p_1 +2 \} \quad & \text{if } p_n < \bar{p}^*,  \\
     p_1 +2 \qquad & \text{otherwise,}
    \end{cases}
\end{equation}
and with a function $g \in L^{r}_{\mathrm{loc}}(\Omega, \mathbb{R}^N)$, with $r$ such that
\begin{equation}\label{Ipotesir}
   r> p_n + 2 .
\end{equation} 
If we assume that 
\begin{equation}\label{Apriori}
V_{p_i}(u_{x_i}) \in W^{1,2}_{\mathrm{loc}}(\Omega,\mathbb{R}^{ n}), \qquad\forall i=1,\dots,n
\end{equation}
then the following estimates
\begin{align}\label{uxistima}
    \sum_{i=1}^{n} \left(  \int_{B_{R/4}}|u_{x_i}|^{p_i +2 } dx \right) & \leq c  \left( \sum_{i=1}^{n} \Vert u_{x_i} \Vert_{L^{p_i}(B_R)} + \Vert g \Vert_{L^r (B_{R})} \right)^\sigma 
\end{align}
and
\begin{align}
      \bar{c}  \sum_{i=1}^{n}  \left( \int_{B_{\frac{R}{4}}} |u_{x_i x_j}|^2|u_{x_i}|^{p_i-2} dx \right) & \leq c  \left( \sum_{i=1}^{n} \Vert u_{x_i} \Vert_{L^{p_i}(B_R)} + \Vert g \Vert_{L^r (B_{R})} \right)^\sigma 
, \label{StimaTeo1}
\end{align}
hold for every pair of concentric balls $B_{R/4} \subset B_{R} \Subset \Omega$, where $c = c(n,p_i,\lambda, \Lambda,R)$ and $\sigma= \sigma (n,p_i,q)$ are positive constants.
\end{thm}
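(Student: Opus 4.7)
The plan is to mirror the argument of Lemma \ref{LemmaInduzione} -- testing the Euler-Lagrange system of $\mathcal{F}$ with a second-order difference quotient -- but now replacing the Lipschitz hypothesis on the coefficients with the Sobolev hypothesis $g\in L^r_{\rm loc}$, $r>p_n+2$, and closing the resulting second-order estimate through a buckling argument based on Proposition \ref{LemmaBrasco}.

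First I will fix concentric balls $B_{R/4}\subset B_\rho\subset B_t\subset B_R\Subset\Omega$, a cut-off $\eta\in C_0^\infty(B_t)$ with $\eta\equiv 1$ on $B_\rho$ and $|D\eta|\le c/(t-\rho)$, and a step $|h|<(R-t)/2$. The a priori assumption $V_{p_i}(u_{x_i})\in W^{1,2}_{\rm loc}$ legitimates using $\varphi=\tau_{s,-h}(\eta^2\tau_{s,h}u)$ as a test function in the Euler-Lagrange equation; the standard rearrangement yields an identity $0=J_1+J_2+J_3+J_4$ in complete analogy with Lemma \ref{LemmaInduzione} (the $\varepsilon$-terms being absent here). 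By \eqref{A2} and Lemma \ref{Fusco}, the term $J_1$ bounds from below $\sum_i\int_\Omega\eta^2|\tau_{s,h}V_{p_i}(u_{x_i})|^2\,dx$, while $J_2$ is handled through \eqref{A3} and Young's inequality exactly as in Lemma \ref{LemmaInduzione}, generating a reabsorbable piece plus a remainder of order $\frac{|h|^2}{(t-\rho)^2}\sum_i\int_{B_t}|u_{x_i}|^{p_i}\,dx$.

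The decisive new feature is the handling of $J_3$ and $J_4$, which now come from \eqref{A4} rather than \eqref{A'4}: the Lipschitz constant $K$ is replaced by $(g(x+he_s)+g(x))$. After a Young splitting that sends a fraction back into $J_1$, one is reduced to estimating $|h|^2\int(g(x+he_s)+g(x))^2|u_{x_i}|^{p_i}\,dx$; I will apply H\"older with exponents $r/2$ and $r/(r-2)$, observing that the hypothesis $r>p_n+2\ge p_i+2$ is precisely what makes $p_i r/(r-2)\le p_i+2$, and hence yields a bound of the form $|h|^2\,\|g\|_{L^r(B_R)}^2\bigl(\int_{B_t}|u_{x_i}|^{p_i+2}\bigr)^{p_i/(p_i+2)}$ up to volume factors. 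Dividing by $|h|^2$, letting $h\to 0$ via Lemma \ref{Lemmahzero}, and invoking Lemma \ref{Fusco} once more to read off the second derivatives, I arrive at an estimate of the form
\begin{equation*}
\sum_{i,s}\int_{B_\rho}|u_{x_ix_s}|^2|u_{x_i}|^{p_i-2}\,dx\le\frac{c}{(t-\rho)^2}\bigl(1+\|g\|_{L^r(B_R)}^2\bigr)\sum_i\Bigl(\int_{B_t}|u_{x_i}|^{p_i+2}\,dx\Bigr)^{\!p_i/(p_i+2)}+\text{l.o.t.}
\end{equation*}
At this point I invoke Theorem \ref{ulimitatoCupini} (whose hypothesis is active when $p_n<\bar p^*$) to obtain $u\in L^\infty_{\rm loc}$, and then Proposition \ref{LemmaBrasco} to convert $\sum_i\int|u_{x_ix_i}|^2|u_{x_i}|^{p_i-2}$ back into $\sum_i\int|u_{x_i}|^{p_i+2}$. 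Since the exponent $p_i/(p_i+2)$ is strictly less than $1$, Young's inequality produces a self-improving bound $\Phi(\rho)\le\theta\,\Phi(t)+\mathcal{K}(t-\rho)^{-\alpha}$ with $\Phi(r):=\sum_i\int_{B_r}|u_{x_i}|^{p_i+2}$ and $\theta<1$; Lemma \ref{lm3} applied on $(R/4,R)$ then yields \eqref{uxistima}, and feeding this bound back into the second-order inequality above yields \eqref{StimaTeo1}.

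The main obstacle is ensuring that the anisotropic H\"older exponent-count in the $g$-terms is compatible with the buckling through Proposition \ref{LemmaBrasco}: one has to verify that the gap condition $p_n<p_1+2$ -- together with $p_n<\bar p^*$ where it applies -- is sharp enough to make every exponent arising after H\"older admissible for Proposition \ref{LemmaBrasco} uniformly in $i$, and to keep the reabsorption coefficient $\theta$ strictly below $1$.
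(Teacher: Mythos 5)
Your overall strategy -- difference quotients in the Euler--Lagrange equation, decomposition into four integrals, reabsorption by Young, passage to the limit in $h$, and a buckling through Theorem~\ref{ulimitatoCupini} plus Proposition~\ref{LemmaBrasco} closed by Lemma~\ref{lm3} -- is the same as the paper's. However, there is a concrete gap in your treatment of $J_2$ (the paper's $I_2$), and it is precisely the point where the hypothesis $p_n<p_1+2$ enters.

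You claim $J_2$ "is handled through \eqref{A3} and Young's inequality exactly as in Lemma~\ref{LemmaInduzione}, generating a reabsorbable piece plus a remainder of order $\frac{|h|^2}{(t-\rho)^2}\sum_i\int_{B_t}|u_{x_i}|^{p_i}\,dx$." This is not correct. In Lemma~\ref{LemmaInduzione} that step works only because the coefficients are Lipschitz, which via Theorem~\ref{thm2Mascolo} gives $Du_\varepsilon\in L^\infty_{\mathrm{loc}}$; the weight $\bigl(|u_{x_i}(x+h)|^2+|u_{x_i}(x)|^2\bigr)^{(p_i-2)/2}$ multiplying $|\tau_{j,h}u|^2$ is then absorbed into $\|Du_\varepsilon\|_{L^\infty(B_R)}^{p_i-2}$. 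Under the Sobolev hypothesis $g\in L^r_{\mathrm{loc}}$ no such gradient $L^\infty$ bound is available, so the term
\begin{equation*}
\frac{c}{(t-s)^2}\int_{B_t}\sum_{i=1}^n\bigl(|u_{x_i}(x+e_jh)|^2+|u_{x_i}(x)|^2\bigr)^{\frac{p_i-2}{2}}|\tau_{j,h}u|^2\,dx
\end{equation*}
must instead be handled by H\"older, leading to $\bigl(\int|u_{x_i}|^{p_i}\bigr)^{(p_i-2)/p_i}\bigl(\int|\tau_{j,h}u|^{p_i}\bigr)^{2/p_i}$. To control $\int|\tau_{j,h}u|^{p_i}$ via Lemma~\ref{ldiff} one must compare $p_i$ with $p_j$: when $i\le j$ one enlarges the exponent to $p_j$ and uses $u_{x_j}\in L^{p_j}$; when $i>j$ (so $p_i\ge p_j$) one must enlarge it up to $p_j+2$ and use $u_{x_j}\in L^{p_j+2}$, which is only possible because the gap condition guarantees $p_i\le p_n\le p_1+2\le p_j+2$. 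The resulting remainder therefore involves $\int|u_{x_j}|^{p_j+2}$ to a power strictly below one (plus a small $\beta$ times $\int|u_{x_j}|^{p_j+2}$ itself after Young), \emph{not} $\sum_i\int|u_{x_i}|^{p_i}$, and this is exactly what makes the buckling through Proposition~\ref{LemmaBrasco} necessary and delicate. Your description places the role of the gap condition only in the $g$-terms and in the admissibility of exponents for Proposition~\ref{LemmaBrasco}, but its actual and unavoidable use is in this Case~1/Case~2 dichotomy for $I_2$. A secondary inaccuracy: $J_3$ (the paper's $I_3$) contains $\tau_{j,h}u$ rather than $\tau_{j,h}Du$, so a Young split of $J_3$ cannot send any piece back into $J_1$; the paper handles $I_3$ with the interpolation parameter $\theta$ and a small $\beta$ in front of $\int|u_{x_i}|^{p_i+2}$, not by reabsorption into $I_1$.
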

\begin{proof}
Fix a ball $B_R \Subset \Omega$ and consider radii $\frac{R}{4}< \rho< s<t<t'<r<R$, a cut-off function $\eta \in C_0^{\infty}(B_t)$, with $\eta=1$ on $B_{s}$, $0 \leq \eta \leq 1$, $|D \eta | \leq \frac{c}{t-s}$  and $|h|\leq \frac{t' - t}{2}$.
We test the Euler-Lagrange equation of \eqref{functional} with the function $$\varphi = \tau_{j,-h}(\eta^2 \tau_{j,h}u),$$ 
thus getting
\begin{align}\label{SommaIntera}
 0= \int_{\Omega} &\langle f_{\xi} (x+e_jh, Du(x+e_jh))- f_{\xi} (x+e_jh, Du(x)) , \eta^2  \tau_{j,h} Du \rangle \, dx \notag \\
    &+ \int_{\Omega} \langle f_{\xi} (x+e_jh, Du(x+e_jh))- f_{\xi} (x+e_jh, Du(x)) , 2 \eta D \eta \tau_{j,h} u \rangle \, dx \notag \\
    &+  \int_{\Omega} \langle f_{\xi} (x+e_jh, Du(x))- f_{\xi} (x, Du(x)) , 2 \eta D \eta \tau_{j,h} u \rangle \, dx \notag \\
    &+  \int_{\Omega} \langle f_{\xi} (x+e_jh, Du(x))- f_{\xi} (x, Du(x)) , \eta^2  \tau_{j,h} Du \rangle \, dx \notag \\
    & =: I_1 + I_2 + I_3 + I_4. \notag
\end{align}
From the previous equality, we deduce that 
\begin{equation}\label{SommaInt}
   I_1 \leq |I_2|  + |I_3| + |I_4| .
\end{equation}
By virtue of assumption \eqref{A2}, we infer
\begin{align}\label{I1}
    I_1 &=  \int_{\Omega} \langle f_{\xi} (x+e_j h, Du(x+ e_jh))- f_{\xi} (x+ e_j h, Du(x)) , \eta^2  \tau_{j,h} Du \rangle \, dx \notag \\
    & \geq l \int_{\Omega} \eta^2 \sum_{i=1}^{n}\, \left( \, |u_{x_i}(x+e_jh)|^2+|u_{x_i}(x) |^2 \, \right)^\frac{p_i -2}{2}| \tau_{j,h} u_{x_i} |^2 dx =: l  \textbf{\textbf{RHS}}.
\end{align}

Now we consider the term $|I_2|$. By using hypotesis \eqref{A3}, Young's  inequality, we get
\begin{align}\label{termtau}
    |I_2| &= \left|   \int_{\Omega} \langle f_{\xi} (x+e_jh, Du(x+e_jh))- f_{\xi} (x+e_jh, Du(x)) , 2 \eta \,   \eta_{x_i} \tau_{j,h} u \rangle \, dx  \right| \notag \\
    & \leq 2 L \int_{\Omega} \eta \sum_{i=1}^{n}\, \left( \, |u_{x_i}(x+e_jh)|^2+|u_{x_i}(x) |^2 \, \right)^\frac{p_i -2}{2}| \tau_{j,h} u_{x_i} | | \tau_{j,h} u ||\eta_{x_i}| dx  \notag \\
& \leq 2 \varepsilon \ \ \ \ \underbrace{\int_{\Omega}  \eta^2 \sum_{i=1}^{n}\, \left( \, |u_{x_i}(x+e_jh)|^2+|u_{x_i}(x) |^2 \, \right)^\frac{p_i -2}{2}| \tau_{j,h} u_{x_i} |^2 dx}_{\textbf{RHS}}  \notag \\
& \qquad + c_\varepsilon \int_{\Omega} \sum_{i=1}^{n}\, \left( \, |u_{x_i}(x+e_jh)|^2+|u_{x_i}(x) |^2 \, \right)^\frac{p_i -2}{2}  | \tau_{j,h} u |^2 \, |\eta_{x_i}|^2 dx  \notag \\
& \leq 2 \varepsilon \,\textbf{\textbf{RHS}} \,  + \frac{c_\varepsilon}{(t-s)^2}  \int_{B_t} \sum_{i=1}^{n}\, \left( \, |u_{x_i}(x+e_jh)|^2+|u_{x_i}(x) |^2 \, \right)^\frac{p_i -2}{2}  | \tau_{j,h} u |^2 dx  \notag \\
& \leq 2 \varepsilon \,\textbf{\textbf{RHS}} \,  + \frac{c_\varepsilon}{(t-s)^2}\sum_{i=1}^{n}  \Big(  \int_{B_t} \, \left( \, |u_{x_i}(x+e_jh)|^2+|u_{x_i}(x) |^2 \, \right)^\frac{p_i}{2}  dx \Big)^{\frac{p_i -2}{p_i}} \cdot \notag \\
& \qquad \qquad \qquad \ \ \ \ \ \cdot \Big( \int_{B_t} | \tau_{j,h} u|^{p_i} dx \Big)^{\frac{2}{p_i}}  \notag \\
& \leq 2 \varepsilon \,\textbf{\textbf{RHS}} \,  + \frac{c_\varepsilon}{(t-s)^2}\sum_{i=1}^{n} \Big(  \int_{B_{t'}} \,  \,|u_{x_i}(x) |^{p_i} \,  dx \Big)^{\frac{p_i -2}{p_i}}\cdot \Big( \int_{B_t} | \tau_{j,h} u|^{p_i} dx \Big)^{\frac{2}{p_i}},
\end{align}
where in the last two inequalities we used that $|D \eta| \leq \frac{c}{t-s}$, H\"older's inequality and Lemma \ref{ldiff}. For the estimate of the last term in \eqref{termtau}, we have to distinguish between two cases, as done in \cite{BraTau}.\\
\textbf{Case 1:} $\quad \mathbf{i \leq j \leq n}$. 
In this case we have that $p_i \leq p_j$ and this allows us to H\"older's inequality as follows
\begin{align}\label{tau2}
\int_{B_t} |\tau_{j,h} u|^{p_i} \, dx &\leq c(R, p_i, p_j) \left( \int_{B_t} |\tau_{j,h} u|^{p_j} \, dx \right)^{\frac{p_i}{p_j}} \leq c(R, p_i, p_j) |h|^{p_i} \left( \int_{B_{t'}} |u_{x_j}|^{p_j} \, dx \right)^{\frac{p_i}{p_j}}.
\end{align}
\textbf{Case 2:}  $\quad \mathbf{j+1 \leq i \leq n }$. Now, we have that $p_i \geq p_j$ but, by assumption \eqref{Ipotesip}, we have $p_i \le p_n \le p_1+2  \le p_j +2$ therefore by H\"older's inequality we have


\begin{align}\label{tau3}
\int_{B_t} |\tau_{j,h} u|^{p_i} \, dx &\leq c(R, p_i, p_j) \left( \int_{B_t} |\tau_{j,h} u|^{p_j+2} \right)^{\frac{p_i}{ p_j+2}} \leq c(R, p_i, p_j) |h|^{p_i} \left( \int_{B_{t'}} |u_{x_j}|^{p_j+2} \right)^{\frac{p_i}{ p_j+2}}. 
\end{align}
Let us explicitly remark that, by  assumption \eqref{Apriori} and Theorem \ref{ulimitatoCupini}, last integral in \eqref{tau3} is finite by the interpolation inequality of Proposition \ref{LemmaBrasco}.\\
Next, inserting \eqref{tau2} and \eqref{tau3} in \eqref{termtau}, we infer
\begin{align}
 |I_2|  & \leq 2 \varepsilon \,\textbf{\textbf{RHS}} \,  + \frac{c_\varepsilon}{(t-s)^2} \, |h|^2\sum_{i=1}^{j}  \, \Big(  \int_{B_{t'}} \,  \,|u_{x_i}(x) |^{p_i} \,  dx \Big)^{\frac{p_i -2}{p_i}} \cdot \left( \int_{B_{t'}} |u_{x_j}|^{p_j} \, dx  \right)^{\frac{2}{p_j}} \notag \\
& \qquad  + \frac{c_\varepsilon}{(t-s)^2} \, |h|^2\sum_{i=j+1}^{n}  \, \Big(  \int_{B_{t'}} \,  \,|u_{x_i}(x) |^{p_i} \,  dx \Big)^{\frac{p_i -2}{p_i}} \cdot \left( \int_{B_{t'}} |u_{x_j}|^{p_j+2} \right)^{\frac{p_i}{ p_j+2}} \notag \\
& \leq 2 \varepsilon \,\textbf{\textbf{RHS}} \,  +\frac{c_\varepsilon}{(t-s)^2} \, |h|^{2}  \sum_{i=1}^{j}  \, \Big(  \int_{B_{t'}} \,  \,|u_{x_i}(x) |^{p_i} \,  dx \Big)^{\frac{p_i -2}{p_i}} \cdot \left( \int_{B_{t'}} |u_{x_j}|^{p_j} \, dx  \right)^{\frac{2}{p_j}} \notag \\
& \qquad  + \frac{c_{\varepsilon,\beta}}{(t-s)^2} \, |h|^{2} \sum_{i=j+1}^{n}  \, \Big(  \int_{B_{t'}} \,  \,|u_{x_i}(x) |^{p_i} \,  dx \Big)^{\frac{(p_i -2)(p_j+2)}{p_i(p_j+2-p_i)}} + \beta |h|^{2} \left( \int_{B_{t'}} |u_{x_j}|^{p_j+2} \right), \notag 
\end{align}
where, in the last line, we used Young's inequality. Therefore, we conclude that
\begin{align}\label{I2}
    |I_2| & \leq  2 \varepsilon \,\textbf{\textbf{RHS}} \,  +\frac{c_\varepsilon}{(t-s)^2} \, |h|^{2}  \sum_{i=1}^{j}  \, \Big(  \int_{B_{t'}} \,  \,|u_{x_i}(x) |^{p_i} \,  dx \Big)^{\frac{p_i -2}{p_i}} \cdot \left( \int_{B_{t'}} |u_{x_j}|^{p_j} \, dx  \right)^{\frac{2}{p_j}} \notag \\
& \qquad  + \frac{c_{\varepsilon,\beta}}{(t-s)^2} \, |h|^{2} \sum_{i=j+1}^{n}  \, \Big(  \int_{B_{t'}} \,  \,|u_{x_i}(x) |^{p_i} \,  dx \Big)^{\frac{(p_i -2)(p_j+2)}{p_i(p_j+2-p_i)}} + \beta |h|^{2} \left( \int_{B_{t'}} |u_{x_j}|^{p_j+2} \right).
\end{align}
From condition \eqref{A4}, Lemma \ref{ldiff}, the properties of $\eta$ and H\"older's inequality, 
 we derive
\begin{align}\label{I3prima}
    |I_3| &= \left| \int_{\Omega} \langle f_{\xi} (x+e_jh, Du(x))- f_{\xi} (x, Du(x)) , 2 \eta D \eta \tau_{j,h} u \rangle \, dx  \right| \notag \\
    & \leq 2|h| \sum_{i=1}^n \int_{\Omega} \eta | \eta_{x_i}||\tau_{j, h} u| \, \Big(g(x+h)+ g(x) \Big) \,|u_{x_i}|^{p_i-1} dx \notag \\
    & \leq \frac{c}{t-s}|h| \sum_{i=1}^n \int_{B_t}  |\tau_{j, h} u| \, \Big(g(x+h)+ g(x) \Big) \,|u_{x_i}|^{p_i-1} dx \notag \\
      &  \leq  \frac{c}{t-s} \, |h| \, \left( \int_{B_{R}} g(x)^{r} dx \right)^{\frac{1}{r}}  \underbrace{\sum_{i=1}^n\left( \int_{B_{t}}|\tau_{j,h} u|^{\frac{r}{r-1}} |u_{x_i}|^{\frac{r(p_i-1)}{r-1}} dx \right)^{\frac{r-1}{r}}}_{J}.  
\end{align}
Notice that 
$$\frac{1}{p_j+2}\frac{r}{r-1} + \frac{p_i-1}{p_i+2}\frac{r}{r-1} < 1$$ 
if and only if
\begin{equation}\label{Stima2p}
  r> \frac{(p_j+2)(p_i+2)}{3(p_j+2)-(p_i+2)}.
\end{equation}
Inequality \eqref{Stima2p} is satisfied by virtue of assumption \eqref{Ipotesir}, once we observe that
$$ \frac{(p_j+2)(p_i+2)}{3(p_j+2)-(p_i+2)} \leq p_i+2 \Longleftrightarrow{ p_i \leq 2p_j +2 }$$
which is  satisfied by assumption \eqref{Ipotesip}. Since by hypothesis \eqref{Ipotesir}, we have that $$r> p_n + 2 > p_i+2, \quad \forall i=1, \dots, n,$$ then the inequality at \eqref{Stima2p} holds true.\\
Therefore, there exists $\theta \in (0,1)$ such that
$$\frac{\theta}{p_j+2}\frac{r}{r-1} + \frac{r}{r-1} \frac{1-\theta}{p_j} + \frac{p_i-1}{p_i+2}\frac{r}{r-1} = 1.$$
Thanks to Hölder's inequality and Lemma \ref{ldiff}, we get
\begin{align}\label{J}
    J &\leq  \left( \int_{B_{t}}|\tau_{j,h} u|^{p_j+2}  \, dx \right)^{\frac{\theta}{p_j+2}} \  \left( \int_{B_{t}}|\tau_{j,h} u|^{ p_j}  \, dx \right)^{\frac{1- \theta}{p_j}} \sum_{i=1}^{n}\left(   \int_{B_t} |u_{x_i}|^{p_i+2} \, dx \right)^{\frac{p_i-1}{p_i+2}} \notag \\
   & \leq  c|h| \ \left( \int_{B_{{t'}}}|u_{x_j}|^{p_j+2}  \, dx \right)^{\frac{\theta}{p_j+2}} \  \left( \int_{B_{{t'}}}|u_{x_j}|^{ p_j}  \, dx \right)^{\frac{1- \theta}{p_j}} \sum_{i=1}^{n} \left(\int_{B_t} |u_{x_i}|^{p_i+2} \, dx \right)^{\frac{p_i-1}{p_i+2}}\notag \\
    & \leq c |h| \ \left( \int_{B_{{t'}}}| u_{x_j}|^{ p_j}  \, dx \right)^{\frac{1- \theta}{p_j}} \sum_{i=1}^{n}\left(  \int_{B_{t'}} |u_{x_i}|^{p_i+2} \, dx \right)^{  \frac{\theta+ p_i-1}{p_i+2}}. 
\end{align} 
Now inserting \eqref{J} in \eqref{I3prima}, using Young's inequality and denoting by $\alpha$ the conjugate of exponent $\frac{p_i+2}{\theta+(p_i -1)}$,  we have
\begin{align}\label{I3}
    |I_3| &\leq   \frac{c_{ \beta}}{t-s} \, |h|^{2} \sum_{i=1}^{n} \left( \int_{B_{R}} g(x)^{r} \, dx \right)^{\frac{\alpha }{r}}  \left( \int_{B_{t'}} |u_{x_i}|^{p_i} \, dx \right)^{\frac{\alpha(1- \theta)}{p_i}} \notag \\
    & \qquad + |h|^{2} \beta \sum_{i=1}^{n} \left( \int_{B_{t'}} |u_{x_i}|^{p_i+2} \, dx \right)
\end{align}
Similarly as for the estimate of $I_3$, we obtain
\begin{align}\label{IntI4prima}
     |I_4| &= \left|  \int_{\Omega} \langle f_{\xi} (x+e_jh, Du(x))- f_{\xi} (x, Du(x)) , \eta^2  \tau_{j,h} Du \rangle \, dx \right| \notag \\
    & \leq c |h| \sum_{i=1}^{n}  \int_{\Omega} \eta^2 |\tau_{j, h}u_{x_i}| \, (g(x+h)+ g(x))\, |u_{x_i}|^{p_i -1} dx \notag \\
    & \leq \varepsilon \ \ \ \ \underbrace{\int_{\Omega} \eta^2 \sum_{i=1}^{n}\, \left( \, |u_{x_i}(x+e_jh)|^2+|u_{x_i}(x) |^2 \, \right)^\frac{p_i -2}{2}| \tau_{j,h} u_{x_i} |^2 dx}_{\textbf{\textbf{RHS}}}  \notag \\
    & \qquad + c_{\varepsilon} |h|^2 \sum_{i=1}^{n}  \int_{\Omega} \eta^2 |u_{x_i}|^{p_i} \, (g(x+h)+ g(x))^2\,  dx \notag \\
     & \leq \varepsilon \textbf{\textbf{RHS}} + c_{\varepsilon} |h|^{2} \sum_{i=1}^{n}  \left( \int_{B_{R}} g(x)^{r} dx \right)^{\frac{2}{r}} \, \left( \int_{B_{t}} |u_{x_i}|^{\frac{p_i r}{r-2}} dx \right)^{\frac{r-2}{r}}.
\end{align}
We note that
$$\frac{(p_i+2)}{p_ir}(r-2)>1 \Longleftrightarrow{r > p_i+2}$$
which is true by assumption \eqref{Ipotesir}.
Therefore applying H\"older's inequality in \eqref{IntI4prima}, we infer 
\begin{align}\label{I4}
 |I_4| &\leq  \varepsilon \textbf{\textbf{RHS}} +    c_{\varepsilon} |h|^{2} \sum_{i=1}^{n}  \left( \int_{B_{R}} g(x)^{r} dx \right)^{\frac{2}{r}} \,   \left( \int_{B_{t}} |u_{x_i}|^{p_i+2} dx \right)^{\frac{p_i}{p_i +2}} \notag \\
 & \leq \varepsilon \textbf{\textbf{RHS}} +  c_{\varepsilon, \beta} |h|^2 \sum_{i=1}^{n} \left( \int_{B_{R}} g(x)^{r} \, dx \right)^{\frac{p_i+2}{r}} +  |h|^2 \beta \sum_{i=1}^{n}  \left( \int_{B_{{t'}}} |u_{x_i}|^{p_i+2} dx \right) \notag \\
 & \leq \varepsilon \textbf{\textbf{RHS}} +  c_{\varepsilon, \beta} |h|^2 \left( \int_{B_{R}} g(x)^{r} \, dx \right)^{\gamma} +  |h|^2 \beta \sum_{i=1}^{n}  \left( \int_{B_{{t'}}} |u_{x_i}|^{p_i+2} dx \right),
\end{align}
where we used Young's inequality with exponents $ \frac{p_i +2}{p_i}$ and $\frac{p_i+2}{2}$ and $\gamma=\gamma(p_i)$.\\
Inserting \eqref{I1}, \eqref{I2}, \eqref{I3} and \eqref{I4} in \eqref{SommaInt}, we get
\begin{align}\label{Formulacompleta}
 l &\int_{\Omega} \eta^2 \sum_{i=1}^{n}\, \left( \, |u_{x_i}(x+e_jh)|^2+|u_{x_i}(x) |^2 \, \right)^\frac{p_i -2}{2}| \tau_{j,h} u_{x_i} |^2 dx \notag \\
& \qquad \leq  3 \varepsilon \, \int_{\Omega}  \eta^2 \sum_{i=1}^{n}\, \left( \, |u_{x_i}(x+e_jh)|^2+|u_{x_i}(x) |^2 \, \right)^\frac{p_i -2}{2}| \tau_{j,h} u_{x_i} |^2 dx \notag \\ 
& \qquad +\frac{c_\varepsilon}{(t-s)^2} \, |h|^{2}  \sum_{i=1}^{j}  \, \Big(  \int_{B_{t'}} \,  \,|u_{x_i}(x) |^{p_i} \,  dx \Big)^{\frac{p_i -2}{p_i}} \cdot \left( \int_{B_{t'}} |u_{x_j}|^{p_j} \, dx  \right)^{\frac{2}{p_j}} \notag \\
& \qquad  + \frac{c_{\varepsilon,\beta}}{(t-s)^2} \, |h|^{2} \sum_{i=j+1}^{n}  \, \Big(  \int_{B_{t'}} \,  \,|u_{x_i}(x) |^{p_i} \,  dx \Big)^{\frac{(p_i -2)(p_j+2)}{p_i(p_j+2-p_i)}} + 3 \beta |h|^2 \sum_{i=1}^{n} \left( \int_{B_{t'}} |u_{x_i}|^{p_i+2} \right)\notag \\
& \qquad + \frac{c_{\beta}}{t-s} \, |h|^{2} \sum_{i=1}^{n} \left( \int_{B_{R}} g(x)^{r} \, dx \right)^{\frac{\alpha }{r}}  \left( \int_{B_{{t'}}} |u_{x_i}|^{p_i} \, dx \right)^{\frac{\alpha(1- \theta)}{p_i}} \notag \\
     & \qquad +   c_{\varepsilon, \beta} |h|^2  \left( \int_{B_{R}} g(x)^{r} \, dx \right)^{^{\gamma}}.  
\end{align}
Choosing $\varepsilon = \frac{l}{6}$, we can reabsorb the first integral in the right-hand side of \eqref{Formulacompleta} by the left-hand side thus getting
\begin{align}
 &\int_{\Omega} \eta^2 \sum_{i=1}^{n}\, \left( \, |u_{x_i}(x+e_jh)|^2+|u_{x_i}(x) |^2 \, \right)^\frac{p_i -2}{2}| \tau_{j,h} u_{x_i} |^2 dx \notag \\
&  \qquad \leq \frac{c}{(t-s)^2} \, |h|^{2}  \sum_{i=1}^{j}  \, \Big(  \int_{B_{t'}} \,  \,|u_{x_i}(x) |^{p_i} \,  dx \Big)^{\frac{p_i -2}{p_i}} \cdot \left( \int_{B_{t'}} |u_{x_j}|^{p_j} \, dx  \right)^{\frac{2}{p_j}} \notag \\
& \qquad  + \frac{c_{\beta}}{(t-s)^2} \, |h|^{2} \sum_{i=j+1}^{n}  \, \Big(  \int_{B_{t'}} \,  \,|u_{x_i}(x) |^{p_i} \,  dx \Big)^{\frac{(p_i -2)(p_j+2)}{p_i(p_j+2-p_i)}} + 3 \beta |h|^2 \sum_{i=1}^{n} \left( \int_{B_{t'}} |u_{x_i}|^{p_i+2} \right)\notag \\
& \qquad + \frac{c_{\beta}}{t-s} \, |h|^{2} \sum_{i=1}^{n} \left( \int_{B_{R}} g(x)^{r} \, dx \right)^{\frac{\alpha }{r}}  \left( \int_{B_{{t'}}} |u_{x_i}|^{p_i} \, dx \right)^{\frac{\alpha(1- \theta)}{p_i}} \notag \\
     & \qquad +   c_{ \beta} |h|^2 \left( \int_{B_{R}} g(x)^{r} \, dx \right)^{^{\gamma}} .
\end{align}
Then, by Lemmas \ref{D1} and \ref{Lemmahzero}, letting $|h| \to 0,$ we obtain
\begin{align}\label{sommauxij}
 &    \sum_{i=1}^{n}   \left( \int_{B_s} |u_{x_i x_j}|^2|u_{x_i}|^{p_i-2} dx \right) \notag \\
&  \qquad \leq \frac{c}{(t-s)^2} \,   \sum_{i=1}^{j}  \, \Big(  \int_{B_{R}} \,  \,|u_{x_i}(x) |^{p_i} \,  dx \Big)^{\frac{p_i -2}{p_i}} \cdot \left( \int_{B_{R}} |u_{x_j}|^{p_j} \, dx  \right)^{\frac{2}{p_j}} \notag \\
& \qquad  + \frac{c}{(t-s)^2} \,  \sum_{i=j+1}^{n}  \, \Big(  \int_{B_{R}} \,  \,|u_{x_i}(x) |^{p_i} \,  dx \Big)^{\frac{(p_i -2)(p_j+2)}{p_i(p_j+2-p_i)}}  \notag \\
& \qquad + \frac{c_{\beta}}{t-s} \,  \sum_{i=1}^{n} \left( \int_{B_{R}} g(x)^{r} \, dx \right)^{\frac{\alpha }{r}}  \left( \int_{B_R} |u_{x_i}|^{p_i} \, dx \right)^{\frac{\alpha(1- \theta)}{p_i}} \notag \\
     & \qquad + c_{\beta}  \left( \int_{B_{R}} g(x)^{r} \, dx \right)^{^{\gamma}}
     + 3\beta \sum_{i=1}^{n} \left( \int_{B_{t'}} |u_{x_i}|^{p_i+2} \right),
\end{align}
which  in particular yields for every $i=1, \dots, n$
\begin{align}
 &       \int_{B_s} |u_{x_i x_i}|^2|u_{x_i}|^{p_i-2} dx   \leq \frac{c}{(t-s)^2} \,   \sum_{h=1}^{i}  \, \Big(  \int_{B_{R}} \,  \,|u_{x_h}(x) |^{p_h} \,  dx \Big)^{\frac{p_h -2}{p_h}} \cdot \left( \int_{B_{R}} |u_{x_i}|^{p_i} \, dx  \right)^{\frac{2}{p_i}} \notag \\
& \qquad  + \frac{c}{(t-s)^2} \,  \sum_{h=i+1}^{n}  \, \Big(  \int_{B_{R}} \,  \,|u_{x_h}(x) |^{p_h} \,  dx \Big)^{\frac{(p_h -2)(p_i+2)}{p_h(p_i+2-p_h)}}  \notag \\
& \qquad + \frac{c_{\beta}}{t-s} \,  \sum_{h=1}^{n} \left( \int_{B_{R}} g(x)^{r} \, dx \right)^{\frac{\alpha }{r}}  \left( \int_{B_R} |u_{x_h}|^{p_h} \, dx \right)^{\frac{\alpha(1- \theta)}{p_h}} \notag \\
     & \qquad + c_{\beta}   \left( \int_{B_{R}} g(x)^{r} \, dx \right)^{^{\gamma}}
      + 3\beta \sum_{h=1}^{n} \left( \int_{B_{t'}} |u_{x_h}|^{p_h+2} \right).
\end{align}
Summing over $i=1,\dots, n$, the previous inequality, we infer
\begin{align}\label{sommasuj}
     \sum_{i=1}^n & \left(   \int_{B_s} |u_{x_i x_i}|^2|u_{x_i}|^{p_i-2} dx  \right) \notag \\
 &\leq  \frac{c}{(t-s)^2} \,  \sum_{i=1}^{n}\sum_{h=1}^{i} \left(  \int_{B_{R}} \,  \,|u_{x_h}(x) |^{p_h} \,  dx \right)^{\frac{p_h -2}{p_h}} \cdot \left( \int_{B_{R}} |u_{x_i}(x)|^{p_i} \, dx  \right)^{\frac{2}{p_i}} \notag \\
& \qquad  + \frac{c}{(t-s)^2} \,   \sum_{i=1}^{n} \sum_{h=i+1}^{n}  \, \Big(  \int_{B_{R}} \,  \,|u_{x_h}(x) |^{p_h} \,  dx \Big)^{\frac{(p_h -2)(p_i+2)}{p_h(p_i+2-p_h)}}   \notag \\
& \qquad +n\Bigg[  \frac{c_{\beta}}{t-s} \,  \sum_{h=1}^{n} \left( \int_{B_{R}} g(x)^{r} \, dx \right)^{\frac{\alpha }{r}}  \left( \int_{B_R} |u_{x_h}|^{p_h} \, dx \right)^{\frac{\alpha(1- \theta)}{p_h}} \notag \\
     & \qquad + c_{\beta}   \left( \int_{B_{R}} g(x)^{r} \, dx \right)^{^{\gamma}}
      + 3\beta \sum_{h=1}^{n} \left( \int_{B_{t'}} |u_{x_h}|^{p_h+2} \right)\Bigg] \notag \\
      &  \leq \frac{c}{(t-s)^2} \,   \sum_{i=1}^{n} \left( 1+ \sum_{h=1}^{n}\int_{B_{R}} \,  \,|u_{x_h}(x) |^{p_h} \,  dx \right)^{\frac{p_i -2}{p_i}} \cdot  \sum_{i=1}^{n} \left( 1 + \sum_{h=1}^{n}\int_{B_{R}} |u_{x_h}(x)|^{p_h} \, dx  \right)^{\frac{2}{p_i}} \notag \\
      & \qquad  + \frac{c}{(t-s)^2} \,   \sum_{i=1}^{n} \sum_{h=i+1}^{n}  \, \Big(  \int_{B_{R}} \,  \,|u_{x_h}(x) |^{p_h} \,  dx \Big)^{\frac{(p_h -2)(p_i+2)}{p_h(p_i+2-p_h)}}   \notag \\
& \qquad +n\Bigg[  \frac{c_{\beta}}{t-s} \,  \sum_{h=1}^{n} \left( \int_{B_{R}} g(x)^{r} \, dx \right)^{\frac{\alpha }{r}}  \left( \int_{B_R} |u_{x_h}|^{p_h} \, dx \right)^{\frac{\alpha(1- \theta)}{p_h}} \notag \\
     & \qquad + c_{\beta}   \left( \int_{B_{R}} g(x)^{r} \, dx \right)^{^{\gamma}}
      + 3\beta \sum_{h=1}^{n} \left( \int_{B_{t'}} |u_{x_h}|^{p_h+2} \right)\Bigg] \notag \\
        &  \leq \frac{c}{(t-s)^2} \,   \sum_{i=1}^{n} \left( 1+ \sum_{h=1}^{n}\int_{B_{R}} \,  \,|u_{x_h}(x) |^{p_h} \,  dx \right)^{\tilde{\alpha}} \cdot  \sum_{i=1}^{n} \left( 1 + \sum_{h=1}^{n}\int_{B_{R}} |u_{x_h}(x)|^{p_h} \, dx  \right)^{\tilde{\beta}} \notag \\
      & \qquad  + \frac{c}{(t-s)^2} \,   \sum_{i=1}^{n} \sum_{h=i+1}^{n}  \, \Big(  \int_{B_{R}} \,  \,|u_{x_h}(x) |^{p_h} \,  dx \Big)^{\frac{(p_h -2)(p_i+2)}{p_h(p_i+2-p_h)}}   \notag \\
& \qquad +n\Bigg[  \frac{c_{\beta}}{t-s} \,  \sum_{h=1}^{n} \left( \int_{B_{R}} g(x)^{r} \, dx \right)^{\frac{\alpha }{r}}  \left( \int_{B_R} |u_{x_h}|^{p_h} \, dx \right)^{\frac{\alpha(1- \theta)}{p_h}} \notag \\
     & \qquad + c_{\beta}   \left( \int_{B_{R}} g(x)^{r} \, dx \right)^{^{\gamma}}
      + 3\beta \sum_{h=1}^{n} \left( \int_{B_{t'}} |u_{x_h}|^{p_h+2} \right)\Bigg] \notag \\
       &  \leq \frac{c}{(t-s)^2} \,  n \left( 1+ \sum_{h=1}^{n}\int_{B_{R}} \,  \,|u_{x_h}(x) |^{p_h} \,  dx \right)^{\tilde{\alpha}} \cdot n \left( 1 + \sum_{h=1}^{n}\int_{B_{R}} |u_{x_h}(x)|^{p_h} \, dx  \right)^{\tilde{\beta}} \notag \\
      & \qquad  + \frac{c}{(t-s)^2} \,   \sum_{i=1}^{n} \sum_{h=i+1}^{n}  \, \Big(  \int_{B_{R}} \,  \,|u_{x_h}(x) |^{p_h} \,  dx \Big)^{\frac{(p_h -2)(p_i+2)}{p_h(p_i+2-p_h)}}   \notag \\
& \qquad +n\Bigg[  \frac{c_{\beta}}{t-s} \,  \sum_{h=1}^{n} \left( \int_{B_{R}} g(x)^{r} \, dx \right)^{\frac{\alpha }{r}}  \left( \int_{B_R} |u_{x_h}|^{p_h} \, dx \right)^{\frac{\alpha(1- \theta)}{p_h}} \notag \\
     & \qquad + c_{\beta}   \left( \int_{B_{R}} g(x)^{r} \, dx \right)^{^{\gamma}}
      + 3\beta \sum_{h=1}^{n} \left( \int_{B_{t'}} |u_{x_h}|^{p_h+2} \right)\Bigg] \notag \\
       &  \leq \frac{n^2 c}{(t-s)^2} \,   \left( 1+ \sum_{h=1}^{n}\int_{B_{R}} \,  \,|u_{x_h}(x) |^{p_h} \,  dx \right)^{\tilde{\alpha}+ \tilde{\beta}}  \notag \\
      & \qquad  + \frac{c}{(t-s)^2} \,   \sum_{i=1}^{n} \sum_{h=i+1}^{n}  \, \Big(  \int_{B_{R}} \,  \,|u_{x_h}(x) |^{p_h} \,  dx \Big)^{\frac{(p_h -2)(p_i+2)}{p_h(p_i+2-p_h)}}   \notag \\
& \qquad +n\Bigg[  \frac{c_{\beta}}{t-s} \,  \sum_{h=1}^{n} \left( \int_{B_{R}} g(x)^{r} \, dx \right)^{\frac{\alpha }{r}}  \left( \int_{B_R} |u_{x_h}|^{p_h} \, dx \right)^{\frac{\alpha(1- \theta)}{p_h}} \notag \\
     & \qquad + c_{\beta}   \left( \int_{B_{R}} g(x)^{r} \, dx \right)^{^{\gamma}}
      + 3\beta \sum_{h=1}^{n} \left( \int_{B_{t'}} |u_{x_h}|^{p_h+2} \right)\Bigg],
\end{align}
where $\tilde{\alpha}= \max \{ \frac{p_i-2}{p_i}, i=1, \dots, n\}$ and $\tilde{\beta}= \max \{ \frac{2}{p_i}, i=1, \dots, n\}$.\\ 
From the hypothesis
 $$V_{p_i}(u_{x_i}) \in W^{1,2}_{\mathrm{loc}}(\Omega,\mathbb{R})$$
and Proposition \ref{LemmaBrasco} together Theorem with \ref{ulimitatoCupini}, we have
$$u_{x_i} \in L^{p_i+2}_{\mathrm{loc}}(\Omega ,\mathbb{R}), \quad \forall i= 1, \dots, n $$
and the following estimate holds for every $i=1,\dots, n$
 \begin{align}\label{TesiProprof}
      \int_{B_{{t'}}} |u_{x_i}|^{p_i+2} dx &\leq c \Vert u \Vert_{\infty} \left[   \left( \int_{B_r} |u_{x_i x_i}|^2|u_{x_i}|^{p_i-2} dx \right)+ \frac{1}{(r-t')^\gamma}  \left(\int_{B_r} |u_{x_i}|^{p_i} dx \right)^{\gamma} \right] \notag \\
      & \leq c \Vert u \Vert_{\infty} \left[   \left( \int_{B_r} |u_{x_i x_i}|^2|u_{x_i}|^{p_i-2} dx \right)+ \frac{1}{(r-t')^\gamma}  \left(\int_{B_R} |u_{x_i}|^{p_i} dx \right)^{\gamma} \right],
   \end{align}
   with $c=c(n, p_i)$ and $\gamma=\gamma(p_i).$\\
Summing the inequality \eqref{TesiProprof} over  $i=1,\dots, n$ and inserting the resulting inequality in \eqref{sommasuj}, we find that
\begin{align}\label{uxipi+2}
 &  \sum_{i=1}^n \left(   \int_{B_s} |u_{x_i x_i}|^2|u_{x_i}|^{p_i-2} dx   \right)  \notag \\ 
& \leq  \frac{n^2 c}{(t-s)^2} \,   \left( 1+ \sum_{i=1}^{n}\int_{B_{R}} \,  \,|u_{x_i}(x) |^{p_i} \,  dx \right)^{\tilde{\alpha}+ \tilde{\beta}}   \notag \\
& \qquad  + \frac{c_2}{(t-s)^2} \,  \sum_{j=1}^{n} \sum_{i=j+1}^{n}  \, \Big(  \int_{B_{R}} \,  \,|u_{x_i}(x) |^{p_i} \,  dx \Big)^{\frac{(p_i -2)(p_j+2)}{p_i(p_j+2-p_i)}}  \notag \\
& \qquad + \frac{c_1}{t-s} \,  \sum_{i=1}^{n} \left( \int_{B_{R}} g(x)^{r} \, dx \right)^{\frac{\alpha }{r}}  \left( \int_{B_R} |u_{x_i}|^{p_i} \, dx \right)^{\frac{\alpha(1- \theta)}{p_i}} \notag \\
     & \qquad + c_{3}  \left( \int_{B_{R}} g(x)^{r} \, dx \right)^{\frac{p_n+2}{r}}  \notag \\
    &  \qquad + \beta \,\tilde{c}\lVert u \rVert_{\infty} \sum_{i=1}^{n}\left[   \left( \int_{B_r} |u_{x_i x_i}|^2|u_{x_i}|^{p_i-2} dx \right) + \frac{c_4}{(r-t')^\gamma}  \left(\int_{B_R} |u_{x_i}|^{p_i} dx \right)^{\gamma} \right] .
\end{align}
Setting  $$ \phi(\rho)= \sum_{i=1}^n   \left( \int_{B_\rho} |u_{x_i x_i}|^2|u_{x_i}|^{p_i-2} dx \right) dx, $$
inequality \eqref{uxipi+2} can be written as follows
    \begin{equation}\label{phirho}
\phi(\rho) \leq \beta \tilde{c} \lVert u \rVert_{\infty} \phi(r)+ c_3 + \frac{c_1}{t-s} + \frac{c_2}{(t-s)^2} + \frac{c_4}{(r-t')^\gamma}
\end{equation}
Choosing $\beta >0$ such that $ \theta =\beta \tilde{c} \lVert u \rVert_{\infty}<1$ and since \eqref{phirho} holds true for every radii $\rho<s<t<t'<r$, we may select radii selecting radii
  $$s= \rho + \frac{r-\rho}{4}, \,t = \rho + \frac{r-\rho}{2}, \, t'= \rho + \frac{3(r-\rho)}{4},$$
  so that $t-s=r-t'=\frac{r- \rho}{4}.$ Hence,  \eqref{phirho} becomes
  \begin{equation}\label{phir}
\phi(\rho) \leq \theta  \phi(r)+ c_3 + \frac{\tilde{c}_1}{r-\rho} + \frac{\tilde{c}_2}{(r-\rho)^2} + \frac{\tilde{c}_4}{(r-\rho)^\gamma}.
\end{equation}
Since \eqref{phir} holds true for every $\frac{R}{4}<\rho<r<R$, we are legitimate to apply Lemma \ref{lm3} to the function $\phi: [\frac{R}{4},R] \to \mathbb{R}$, thus getting
\begin{equation}\label{phiR}
    \phi \left(\frac{R}{4} \right) \leq  c_3 + \frac{\tilde{c}_1}{R} + \frac{\tilde{c}_2}{R^2} + \frac{\tilde{c}_4}{R^\gamma}.
    \end{equation}
   Recalling the definition of $\phi(t)$, \eqref{phiR} yields that 
    \begin{align}\label{Stimacompattauxipi+2}
        \sum_{i=1}^{n}   \left( \int_{B_{\frac{R}{4}}} |u_{x_i x_i}|^2|u_{x_i}|^{p_i-2} dx \right) & \leq c  \left( \sum_{i=1}^{n} \Vert u_{x_i} \Vert_{L^{p_i}(B_R)} + \Vert g \Vert_{L^r (B_{R})} \right)^\sigma ,
\end{align}
where $c=c(p_i,n,l L, R)$ and $\sigma=\sigma(n,p_i)$
Putting \eqref{Stimacompattauxipi+2} in \eqref{TesiProprof}, we derive also
\begin{align}\label{pi+2}
 \sum_{i=1}^n \int_{B_{\frac{R}{4}}} |u_{x_i}|^{p_i+2} dx & \leq c \left( \sum_{i=1}^{n} \Vert u_{x_i} \Vert_{L^{p_i}(B_R)} + \Vert g \Vert_{L^r (B_{R})} \right)^\sigma
     , 
\end{align}
for positive constants $c = c(n,p_i,\lambda, \Lambda,R)$ and $\sigma= \sigma (n,p_i)$.\\
Now, inserting \eqref{pi+2} in \eqref{sommauxij}, we infer for every $i=1, \dots, n$ and  for every $j=1, \dots, n.$
\begin{align}\label{Stimacompattauxixjpi+2}
        \sum_{i=1}^{n}   \left( \int_{B_{\frac{R}{4}}} |u_{x_i x_j}|^2|u_{x_i}|^{p_i-2} dx \right) & \leq c  \left( \sum_{i=1}^{n} \Vert u_{x_i} \Vert_{L^{p_i}(B_R)} + \Vert g \Vert_{L^r (B_{R})} \right)^\sigma. 
\end{align}
which concludes the proof.
\end{proof}
\section{Approximation}\label{Appro}
In order to perform the approximation argument, let us fix a non-negative smooth kernel $\phi \in \mathcal{C}^\infty_0(B_1(0))$ such that $\int_{B_1(0)} \phi =1$ and consider the corresponding family of mollifiers $(\phi_k)_{k >0}$. We set, for a given ball $B_R \Subset \Omega$ ,
$$a_i^k=a_i * \phi_k,$$
$$g_k(x)=\underset{i}{\max} \{ |D a_i^k (x)|\}$$
and, for $x \in B_R$, we define
\begin{equation}\label{fvarepsilon}
f^k (x,\xi)= \sum_{i=1}^{n} a_i^k(x)|\xi|^{p_i} ,
\end{equation}
 for every $k < \text{dist}(B_R,\Omega)$. We shall use the following $$\mathcal{F}^{k}(u,B_R):=\int_{_{B_R} }f^{k}(x, \xi) dx$$ and
$$\mathcal{F}^{k}_{\varepsilon}(u,B_R):= \, \int_{B_R} \left( f^k(x,\xi)+ \varepsilon(1+ |\xi|^2)^{\frac{p_n}{2}} \right) dx.$$
\begin{flushleft}
One can easily check that $f^k(x, \xi)$ satisfies assumptions \eqref{A2}--\eqref{A3} and \eqref{A'4}, with $K = \underset{i}{\max} \left\{ \sup_{B_R} |a_i^k(x)| \right\}$.
\end{flushleft}
We consider the variational problems
\begin{equation}\label{Pfj}
    \inf \left\{  \int_{B_R} \left( f^k(x,Dv)+ \varepsilon(1+ |Dv|^2)^{\frac{p_n}{2}} \right) dx \ : \ v \in W^{1,p_n}_0(B_R, \mathbb{R}^N)+u^\eta \right\},
\end{equation}
where $$u^\eta = u * \varsigma_\eta$$ is the mollification of the local minimizer $u$ of \eqref{functional}, for a sequence of mollifier $\varsigma_n$.\\
It is well known that, by the direct methods of the calculus of variations, there exists a unique solution $u^\eta_{k,\varepsilon} \in W^{1,p_n}_0(B_R, \mathbb{R}^N)+ u^\eta $ of the problem \eqref{Pfj}. Since the integrand $f^k(x,Dv)+ \varepsilon(1+ |Dv|^2)^{\frac{p_n}{2}}$ satisfies the assumptions of  Theorem \ref{thm2Mascolo}, we have that $$V_{p_i}((u^\eta_{k,\varepsilon})_{x_i}) \in W_{\mathrm{loc}}^{1,2}(B_R).$$ Hence we are legitimate to use estimate \eqref{uxistima} of Theorem \ref{AppThm}, to obtain that
\begin{align*}
    \sum_{i=1}^{n} \left(  \int_{B_{R/4}}|(u^\eta_{k,\varepsilon})_{x_i}|^{p_i +2 } dx \right) 
    & \leq c \left( \sum_{i=1}^{n} \int_{B_R}\, \lvert(u^\eta_{k,\varepsilon})_{x_i} \rvert^{p_i} \, dx + \Vert g_k \Vert_{L^r (B_{R})} \right)^\sigma \\
     &  \leq c(\lambda) \left( \sum_{i=1}^{n} \int_{B_R}a_i^k(x)\, \lvert (u^\eta_{k,\varepsilon})_{x_i}\rvert^{p_i} \, dx + \Vert g_k \Vert_{L^r (B_{R})} \right)^\sigma \\
    & = c  \left( \sum_{i=1}^{n} \int_{B_R} f^k(x,D u^\eta_{k,\varepsilon}) dx+ \Vert g_k \Vert_{L^r (B_{R})} \right)^\sigma \\
    & \leq c  \left( \sum_{i=1}^{n} \int_{B_R} \left(f^k(x,D u^\eta_{k,\varepsilon}) + \varepsilon(1+|Du^\eta_{k,\varepsilon}|^2)^{\frac{p_n}{2}} \right) dx+ \Vert g_k \Vert_{L^r (B_{R})} \right)^\sigma  ,
\end{align*}
where  $c$ is a positive constant that is independent on $\varepsilon, k$ and $\eta$ and where  we used that $a_i^k(x)\geq \lambda, \forall i,k$. Using  the minimality of $u^\eta_{k,\varepsilon}$ in the right-hand side of previous estimate, we have that
\begin{align}\label{stimaukeni}
    \sum_{i=1}^{n} \left(  \int_{B_{R/4}}|(u^\eta_{k,\varepsilon})_{x_i}|^{p_i +2 } dx \right) & \leq c  \left( \sum_{i=1}^{n} \int_{B_R} \left(f^k(x,D u^\eta_{k,\varepsilon}) + \varepsilon(1+|Du^\eta_{k,\varepsilon}|^2)^{\frac{p_n}{2}} \right) dx+ \Vert g_k \Vert_{L^r (B_{R})} \right)^\sigma \notag \\
     & \leq c  \left( \sum_{i=1}^{n} \int_{B_R} \left(f^k(x,D u^\eta) + (1+|Du^\eta|^2)^{\frac{p_n}{2}} \right) dx+ \Vert g_k \Vert_{L^r (B_{R})} \right)^\sigma,
\end{align}
where $c$ is a positive constant that does not depend on $\eta, k$ and $\varepsilon$.\\
    Since $u^\eta \in W^{1,p_n}(B_R, \mathbb{R}^N)$, the right-hand side of the previous estimate is finite, and therefore, the left-hand side of the previous estimate is uniformly bounded  with respect to $\varepsilon$ for $\varepsilon \in (0,1)$. Therefore there exists $v^\eta_k$ such that, up to a subsequence,
    $$u^\eta_{k,\varepsilon} \rightharpoonup v^\eta_k \text{  weakly in  } W^{1, \textbf{p}+2}(B_R, \mathbb{R}^N) \, \text{ as  } \varepsilon \to 0.$$
By the weak lower semicontinuity of the norm, we get
\begin{align}\label{stimaconv1}
    \sum_{i=1}^{n} \left(  \int_{B_{R/4}}|(v^\eta_k)_{x_i}|^{p_i +2 } dx \right) & \leq \liminf_{\varepsilon}\sum_{i=1}^{n} \left(  \int_{B_{R/4}}|(u^\eta_{k,\varepsilon})_{x_i}|^{p_i +2 } dx \right) \notag \\
    & \leq c \liminf_{\varepsilon}    \left( \sum_{i=1}^{n}  \int_{B_R} \left(f^k(x,D u^\eta) + \varepsilon(1+|Du^\eta|^2)^{\frac{p_n}{2}} \right) dx+ \Vert g_k \Vert_{L^r (B_{R})} \right)^\sigma  \notag \\
    &= c  \left( \sum_{i=1}^{n} \int_{B_R}f^k(x,D u^\eta)  dx+ \Vert g_k \Vert_{L^r (B_{R})} \right)^\sigma ,
\end{align}
where we used \eqref{stimaukeni} and that, since $u^\eta \in W^{1, p_n}(\Omega)$, it holds 
\begin{equation}\label{fkappalim}
\liminf_{\varepsilon \to 0} \, \varepsilon \int_{B_R}(1+|Du^\eta|^2)^{\frac{p_n}{2}} dx =0. 
\end{equation}
 Using the definition of $f_k$ at \eqref{fvarepsilon}, we have that
\begin{equation}\label{convFk}
    \left|  \int_{B_R}   f^k(x,Du^\eta) -  f(x,Du^\eta) dx \right|\leq \sum_{i=1}^n \int_{B_R} |a^k_i(x)- a_i(x)|\, | u^\eta_{x_i}|^{p_i} dx .
\end{equation}
 So the fact that 
 $$a^k_i(x) \to a_i(x)\quad \text{uniformly on compact sets }$$ gives that
$$\lim_{k \to 0^+} \sum_{i=1}^n \lvert \lvert a^k_i(x)- a_i(x) \rvert \rvert_{\infty} \int_{B_R} \, | u^{\eta}_{x_i}|^{p_i} dx =0. $$
Therefore, passing to the limit as $k \to 0^+$ in \eqref{convFk}, yields
\begin{equation}\label{fktoF}
   \int_{B_R} f^k(x, Du^\eta) dx \to \int_{B_R}f(x,Du^\eta) dx.
    \end{equation}
By the properties of mollification it holds $g_k \to g$ in $L^r(B_R)$, and so, by \eqref{fktoF}, the right-hand side of \eqref{stimaconv1} is uniformly bounded w.r.t. to $k$. Therefore there exists $v^\eta$ such that, up to a subsequence,
    $$v^\eta_{k} \rightharpoonup v^\eta \text{  weakly in  } W^{1, \textbf{p}+2}(B_R, \mathbb{R}^N) \, \text{ as  } k \to 0.$$
Again by the weak lower semicontinuity of the norm, \eqref{stimaconv1} and \eqref{fktoF}, we get
\begin{align}\label{stimaconv2}
    \sum_{i=1}^{n} \left(  \int_{B_{R/4}}|(v^\eta)_{x_i}|^{p_i +2 } dx \right) & \leq \liminf_{k}\sum_{i=1}^{n} \left(  \int_{B_{R/4}}|(v^\eta_{k})_{x_i}|^{p_i +2 } dx \right) \notag \\
    &\leq c  \left( \sum_{i=1}^{n} \int_{B_R}f(x,D u^\eta)  dx+ \Vert g \Vert_{L^r (B_{R})} \right)^\sigma .
\end{align}
Now, we note that
\begin{align}\label{convF}
    \left|  \int_{B_R}   f(x,Du^\eta) -  f(x,Du) dx \right| &\leq \left| \sum_{i=1}^n \int_{B_R} \Big( a_i(x)\,  |u^\eta_{x_i}|^{p_i}-a_i(x)  |u_{x_i}|^{p_i} \Big)dx \right| \notag \\
     &\leq  \sum_{i=1}^n \int_{B_R}a_i(x)\,\Big|  |u^\eta_{x_i}|^{p_i} -|u_{x_i}|^{p_i} \Big|dx \notag \\
    &\leq \overline{c } \sum_{i=1}^n \int_{B_R}a_i(x)\,\left(  |u^\eta_{x_i}| +|u_{x_i}| \right)^{p_i-1} \,  |u^\eta_{x_i}- u_{x_i}| dx \notag \\
    &\leq \sum_{i=1}^n  c(\Lambda, p_i) \,\int_{B_R} \left(  |u^\eta_{x_i}| +|u_{x_i}| \right)^{p_i-1} \,  |u^\eta_{x_i}- u_{x_i}| dx  ,
\end{align}
where we used the left inequality in the  Lemma \ref{Duzaar}, with $\xi=|u_{x_i}|$ and $\eta=|u_{x_i}^\eta|$ and $\alpha= p_i$.\\
Since, by property of mollification, it holds 
$$u^\eta_{x_i} \to u_{x_i}  \text{ strongly in }L^{p_i},$$
 passing to the limit as $\eta \to 0^+$ in \eqref{convF}, we have
\begin{equation}\label{IntfDuni}
    \int_{B_R}f(x, Du^\eta) \to  \int_{B_R}f(x,Du) .
    \end{equation}
 By virtue of \eqref{IntfDuni}, we observe that the right-hand side of \eqref{stimaconv2} is uniformly bounded w.r.t. $\eta$. Therefore there exists $v$ such that, up to a subsequence,
    $$v^\eta \rightharpoonup v \text{  weakly in  } W^{1, \textbf{p}+2}(B_R, \mathbb{R}^N) \, \text{ as  } \eta \to 0.$$
Once again, by the weak lower semicontinuity of the norm, we derive
\begin{align}\label{stimaconv3}
    \sum_{i=1}^{n} \left(  \int_{B_{R/4}}|v_{x_i}|^{p_i +2 } dx \right) & \leq \liminf_{\eta}\sum_{i=1}^{n} \left(  \int_{B_{R/4}}|(v^\eta)_{x_i}|^{p_i +2 } dx \right) \notag \\
    & \leq \lim_{\eta}  c  \left( \sum_{i=1}^{n} \int_{B_R}f(x,D u^\eta)  dx+ \Vert g \Vert_{L^r (B_{R})} \right)^\sigma  \notag \\
    &= c  \left( \sum_{i=1}^{n} \int_{B_R}f(x,D u)  dx+ \Vert g \Vert_{L^r (B_{R})} \right)^\sigma. 
\end{align}
In order to conclude the proof, it suffices to show that $u=v$ a.e. in $B_R$.\\
By the weak lower semicontinuity of the functionals  $\mathcal{F}$ and $\mathcal{F}^k$ in $W^{1, \textbf{p}}(B_R)$ , the weak convergence of $v^\eta \rightharpoonup v$ in $W^{1, \textbf{p}+2}(B_R)$, \eqref{fktoF} and $v^\eta_k \rightharpoonup v^\eta$ in $W^{1, \textbf{p}+2}(B_R)$, we get
\begin{align}
    \int_{B_R}f(x, Dv) dx & \leq \liminf_{\eta} \int_{B_R} f(x, Dv^\eta) dx \leq \liminf_{\eta} \liminf_{k} \int_{B_R} f^k(x, Dv^\eta) dx \notag \\   
    & \leq \liminf_{\eta}\liminf_{k} \int_{B_R} f^k(x, Dv^\eta_k) dx \notag \\
     & \leq \liminf_{\eta}\liminf_{k} \int_{B_R} \left( f^k(x, Dv^\eta_k) )+ \varepsilon(1+ |Du^\eta_{k, \varepsilon}|^2)^{\frac{p_n}{2}}   \right)dx, \notag 
    \end{align}
   where the last inequality is trivial, by the non-negativity of the last term.\\
    Thanks to the weak convergence of $u^\eta_{k, \varepsilon} \rightharpoonup v^\eta_k$ in $W^{1, \textbf{p}+2}(B_R)$ and again the weak lower semicontinuity of functional $\mathcal{F}^k_\varepsilon$, we have 
   \begin{align}\label{pallino}
    \int_{B_R}f(x, Dv) dx & \leq \liminf_{\eta}\liminf_{k}\liminf_{\varepsilon} \int_{B_R} \left( f^k(x, Du^\eta_{k, \varepsilon})+ \varepsilon(1+ |Du^\eta_{k, \varepsilon}|^2)^{\frac{p_n}{2}}  \right) dx \notag \\
     & \leq \liminf_{\eta}\liminf_{k}\liminf_{\varepsilon} \int_{B_R} \left( f^k(x, Du^\eta)+ \varepsilon(1+ |Du^\eta|^2)^{\frac{p_n}{2}} \right)dx \notag \\
      & = \liminf_{\eta}\liminf_{k} \int_{B_R} f^k(x, Du^\eta) dx , 
      \end{align} 
      where we used the minimility of $u^\eta_{k, \varepsilon}$ and \eqref{fkappalim}.
      By \eqref{pallino},  \eqref{fktoF} and \eqref{IntfDuni}, we derive
      \begin{align}
    \int_{B_R}f(x, Dv) dx  \leq \liminf_{\eta} \int_{B_R} f(x, Du^\eta) dx  \leq  \int_{B_R} f(x, Du) dx,
\end{align}
and then, by minimality of  $u$,  we infer
\begin{equation*}
     \int_{B_R} f(x, Dv) dx =\int_{B_R} f(x, Du) dx.
\end{equation*}
By the strict convexity of $\xi \to f(x, \xi)$, we deduce that $u=v$. Then $u \in W^{1, \textbf{p}+2}$ and so we can argue as in the proof of Theorem \ref{AppThm}, thus obtaining 
\begin{align}
       \sum_{i=1}^{n}   \left( \int_{B_{\frac{R}{4}}} |u_{x_i x_j}|^2|u_{x_i}|^{p_i-2} dx \right) & \leq c  \left( \sum_{i=1}^{n} \Vert u_{x_i} \Vert_{L^{p_i}(B_R)} + \Vert g \Vert_{L^r (B_{R})} \right)^\sigma ,
\end{align}
and this conclude the proof.

\vskip0.5cm

\noindent {{\bf Acknowledgements.}The author is a member of the Gruppo Nazionale per l'Analisi Matematica, la Probabilità e le loro Applicazioni (GNAMPA) of the Istituto Nazionale di Alta Matematica (INdAM). She has also been supported through the project: Sustainable Mobility Center (Centro Nazionale per la Mobilità Sostenibile – CNMS) - SPOKE 10. This manuscript reflects only the author's views and opinions, and the Ministry cannot be considered responsible for them.

\end{document}